\theoremstyle{plain} 
\newtheorem{theorem}{Theorem}[section] 
\newtheorem{lemma}[theorem]{Lemma}
\newtheorem{proposition}[theorem]{Proposition}
\theoremstyle{definition}
\newtheorem{definition}[theorem]{Definition}
\newtheorem{remark}[theorem]{Remark}
\def\Bline{%
\noalign{\ifnum0=`}\fi\hrule \@height 1pt \futurelet
\reserved@a\@xhline}
\newcommand{\textR}[1]{#1}
\newcommand{\R}{\mathbb{R}}
\newcommand{\pd}[2]{\dfrac{\partial#1}{\partial#2}}
\renewcommand{\phi}{\varphi}
\renewcommand{\epsilon}{\varepsilon}
\newcommand{\wtilde}[1]{\widetilde{#1}}
\renewcommand{\geq}{\geqslant}
\renewcommand{\leq}{\leqslant}
\newcommand{\ma}{\mathcal{A}}
\newcommand{\wt}[1]{\widetilde{#1}}
\newcommand{\tk}[1]{\widetilde{\kappa}_{#1}}
\newcommand{\tv}[1]{\widetilde{\bm{v}}_{#1}}
\begin{document}

\title[Height functions on singular surfaces]
{Height functions on singular surfaces parameterized by smooth maps $\mathcal{A}$-equivalent to $S_k$, $B_k$, $C_k$ and $F_4$.}

\author[T.~Fukui and M.~Hasegawa]
{Toshizumi Fukui and Masaru Hasegawa}

\address[Toshizumi Fukui]{%
Departmet of Mathematics, Faculty of Science, Saitama University, Saitama, 338-8570, Japan.
}
\email{tfukui@rimath.saitama-u.ac.jp}

\address[Masaru Hasegawa]{%
Department of Information Science, Center for Liberal Arts and Sciences,
Iwate Medical University, 2-1-1, Nishi-Tokuda, Yahaba-cho, Shiwa-gun, 028-3694, Iwate, Japan
}
\email{mhase@iwate-med.ac.jp}

\subjclass[2020]{%
Primary 53A05; 
Secondary 58K05 
}

\keywords{Singular surfaces, Dual, Height functions, $\ma$-simple map-germs.} 

\thanks{
The first author is partially supported by grant-in-Aid in Science 26287011.
The second author was supported by the FAPESP post-doctoral grant number 2013/02543-1 during a post-doctoral period at ICMC-USP} 

\begin{abstract}
We describe singularities of height functions on singular surfaces in $\R^3$ parameterized by smooth map-germs $\mathcal{A}$-equivalent to one of $S_k$, $B_k$, $C_k$ and $F_4$ singularities in terms of extended geometric language via finite succession of blowing-ups. 
We investigate singularities of dual surfaces of such singular surfaces.
\end{abstract}

\maketitle

\section{Introduction.}
 
Singular surfaces are studied as objects in the extrinsic differential geometry (for example, \cite{BW1998, FH2012, FH2013, FH2015, MN-B2014, NT2007, O-ST2012, Tari2007, Teramoto2015, West1995}). 
The distance-squared functions and height functions are fundamental tools in such researches. 
In our previous work \cite{FH2015}, we investigate the family of distance-squared functions on singular surfaces with $S_k$, $B_k$, $C_k$ and $F_4$ singularities. 
In this paper, we are going to investigate height functions on such singular surfaces. 

Let $f:(\R^2,0)\to(\R^3,0)$ be a smooth map-germ which parameterize a surface $S$ (possibly with singularities) in $\R^3$.
We consider families $H:(\R^2\times S^2,(0,\bm{v}_0))\to\R$ defined by 
\[
H(u,v,\bm{v})=\langle f(u,v),\bm{v} \rangle,
\]
and $\wt{H}:(\R^2\times S^2 \times \R, (0,\bm{v}_0,t_0))\to\R$ defined by
$$
\wt{H}(u,v,\bm{v},t) = H(u,v,\bm{v})-t = \langle f(u,v),\bm{v}\rangle - t,
$$
where $S^2$ is the unit sphere in $\R^3$ and $\langle\ ,\ \rangle$ denotes the Euclidean inner product in $\R^3$.
We define $h_{\bm{v}}(u,v)= H(u,v,\bm{v})$, which is the height function on $S$ along $\bm{v}$, and also $h_{\bm{v},t}(u,v) = H(u,v,\bm{v},t)$, which is the extended height function on $S$ along $\bm{v}$. 
These families are important, since the bifurcation set of $H$ is the singular values of the Gauss map of $S$ and the discriminant set of $\wt{H}$ is isomorphic to the dual surface of $S$. 
For regular surfaces (\cite{BGT1995}) and Whitney umbrellas (\cite{FH2013}), we have several criteria on singularities of $h_{\bm{v}}$ and $\mathcal{R}^+$-versality of $H$ (Table~\ref{tab:regular} and \ref{tab:WU}). 

\begin{table}[htbp]
\caption{Criteria of the singularity of $h_{\bm{v}}$ on a regular surface $S$ at $p\in S$ and conditions for $H$ to be a versal unfolding of $h_{\bm{v}}$.}
\label{tab:regular}
\begin{tabular}{clll}
\hline
 & Criteria of the sing. of $h_{\bm{v}}$ & Conditions for $H$ to be versal\\
\hline
$A_1$ & \begin{minipage}{7cm}
$\bm{v}$ is normal to $S$ at $p$ and $p$ is not parabolic
\end{minipage} & Always \\\hline
$A_2$ & \begin{minipage}{7cm}
$\bm{v}$ is normal to $S$ at $p$ and $p$ is a parabolic point, which is not ridge
\end{minipage} & Always\\\hline
$A_3$ & \begin{minipage}{7cm}
$\bm{v}$ is normal to $S$ at $p$ and $p$ is a parabolic point, which is 1st order ridge
\end{minipage} & The parabolic locus is a smooth curve\\
\hline
\end{tabular}
\end{table}
\begin{table}[htbp]
\caption{Criteria of the singularity of $h_{\bm{v}}$ on a Whitney umbrella $S$ at its singular point $p$ and conditions for $H$ to be a versal unfolding of $h_{\bm{v}}$.}
\label{tab:WU}
\begin{tabular}{clll}
\hline
 & Criteria of the sing. of $h_{\bm{v}}$ & Conditions for $H$ to be versal\\
\hline
$A_1$ & \begin{minipage}{7cm}
$\bm{v}$ is normal to $S$ at $p$ and $\bm{v}$ attains no parabolic point over $p$
\end{minipage} & Always \\\hline
$A_2$ & \begin{minipage}{7cm}
A normal $\bm{v}$ attains a parabolic point over $p$, which is not ridge
\end{minipage} & Always\\\hline
$A_3$ & \begin{minipage}{7cm}
A normal $\bm{v}$ attains a parabolic point over $p$, which is 1st order ridge
\end{minipage} & Whitney umbrella is elliptic\\
\hline
\end{tabular}
\end{table}

We expect these observations can be generalized to singular surfaces with more degenerate singularities.
This paper is a trial in this direction. 
Actually, in \cite{Mond1985}, D.~Mond classified $\mathcal{A}$-simple map-germs $(\R^2,0)\to(\R^3,0)$ (Table \ref{tab:A-simple}), and, in this paper, we generalize these observations above for surfaces with $\mathcal{A}$-simple singularities of type $S_k$, $B_k$, $C_k$ and $F_4$. 
Our result (Theorem~\ref{thm:Height}) is summarized  as Table~\ref{tab:sing}. 
The proof is based on differential geometric treatment of singular surfaces via resolutions of singularities. 

\begin{table}[htbp]
\caption{Criteria of the singularity of $h_{\bm{v}}$ on a singular surfaces $S$ with $S_k$, $B_k$, $C_k$ and $F_4$ singularities at its singular point $p$ and conditions for $H$ and $\wt{H}$ to be a versal unfolding of $h_{\bm{v}}$ and $h_{\bm{v},t}$, respectively. }
\label{tab:sing}
\begin{tabular}{clll}
\hline
& Criteria of the sing. of $h_{\bm{v}}$ and $h_{\bm{v},t}$& Conditions for $H$ and $\wt{H}$ to be versal\\
\hline
$A_1$ & \begin{minipage}{7cm}
$\bm{v}$ is normal to $S$ at $p$ and $\bm{v}$ attains no parabolic point over $p$
\end{minipage} & Always \\\hline
$A_2$ & \begin{minipage}{7cm}
A normal $\bm{v}$ attains a parabolic point over $p$, which is not ridge
\end{minipage} & Always\\\hline
$A_3$ & \begin{minipage}{7cm}
A normal $\bm{v}$ attains a parabolic point over $p$, which is 1st order ridge
\end{minipage} & The sing. point is not inflection\\
\hline
\end{tabular}
\end{table}

We show that a unified treatment is possible for $S_k$, $B_k$, $C_k$ and $F_4$ singularities.  
We do not treat the case with $H_k$ singularities, because this requires another type of resolution. 

As an application, we show criteria of the singularities of dual surfaces of our singular surfaces (Theorem~\ref{thm:dual}). 
We obtain sufficient conditions that the dual surfaces have cuspidal edge or swallowtail as singularities. 
 
 

\begin{table}[ht!]
\caption{Classes of $\mathcal{A}$-simple map-germs.}
\centering
\begin{tabular}{ccc}
\hline
Name & Normal form & $\mathcal{A}$-codim.\\\hline
Immersion & $(x, y, 0)$ & $0$ \\
Whitney umbrella ($S_0$) & $(x, y^2, x y)$ & $2$ \\
$S_k^\pm$ & $(x, y^2, y^3 \pm x^{k+1} y)$,\, $k\geq1$ & $k+2$\\
$B_k^\pm$ & $(x, y^2, x^2 y \pm y^{2k+1})$,\, $k\geq2$ & $k+2$\\
$C_k^\pm$ & $(x, y^2, x y^3 \pm x^k y)$,\, $k\geq3$ & $k+2$\\
$F_4$ & $(x, y^2, x^3 y + y^5)$ & $6$\\
$H_k$ & $(x, x y + y^{3k-1}, y^3)$,\, $k\geq2$ & $k+2$\\\hline
\end{tabular}
  
(When $k$ is even, $S_k^+$ is equivalent to $S_k^-$, and $C_k^+$ to $C_k^-$.)
\label{tab:A-simple}
\end{table}
 
The paper is organized as follows.
In Section 2, we recall several geometric notions for singular surfaces we treat, introduced in \cite{FH2015}. 
We use finite successions of blowing-ups of singular surfaces parameterized by smooth map-germs $\mathcal{A}$-equivalent to one of $S_k$, $B_k$, $C_k$ and $F_4$ singularities.
In Section 3, we describe criteria of singularities of the height functions and versality of the families $H$ and $\wt{H}$ (Theorem~\ref{thm:Height}).
In Section 4, we discuss relations of the singularities of height functions with the parabolic locus of our singular surfaces (Theorem~\ref{thm:parabolic}).
In Section 5, as an application, we show criteria for singularities of dual surfaces of our singular surfaces in terms of the several geometric notions introduced in Section 2 (Theorem~\ref{thm:dual}).

\section{Differential geometry for singular surfaces.}

If $S$ is a regular surface in $\R^3$ and contains the origin, its tangent plane can be given by the $xy$-plane by using a rotation in $\R^3$.
Then $S$ is defined as the graph of the equation $z = f(x,y)$ for some function $f$, and taking $x$ and $y$-axes to be the principal directions at the origin, $f$ can be locally expressed as
\[
f(x,y) = \frac12(k_1 x^2 + k_2 y^2) + O(x,y)^3,
\]
where $k_1$ and $k_2$ are the principal curvature at the origin.

Two map-germs $f, g:(\R^2,0)\to(\R^3,0)$ are said to be \textit{$\mathcal{A}$-equivalent} if $g = \Phi \circ f \circ \phi^{-1}$ for some germs of diffeomorphisms $\phi$ and $\Phi$ of, respectively, the source and target. 
A map-germ $f:(X,x_0)\to(Y,y_0)$, where $X$ and $Y$ aree topological space, is said to be \textit{$\mathcal{A}$-simple} if there is a finite number of equivalence classes such that if $f$ is embedded in any family $F:(X\times P, (x_0,p_0)) \to (Y,y_0)$, then for every $(x,p)$ in a sufficiently small neighbourhood of $(x_0,p_0)$, the germ of $f_p$ at $x$, where we define $f_p(x)=F(x,p)$, lies in one of these equivalence classes.

If $S$ is a singular surface parameterized by a smooth map-germ $g:(\R^2,0)\to(\R^3,0)$ whose 2-jet is $\mathcal{A}$-equivalent to $(u,v^2,0)$, then $g$ can be expressed as the following normal form by using change of coordinates in the source and a rotation in the target which do not change the geometry of $S$.

\begin{proposition}
[\cite{FH2015}]
 \label{prop:normal_form}
Let $g:(\R^2,0)\to(\R^3,0)$ be a map-germ whose $2$-jet is $\mathcal{A}$-equivalent to $(u,v^2,0)$. 
Then, after using rotations in the target and changes of coordinates in the source, we can reduce $g$ to the form \begin{equation}
\label{eq:normal_form_corank1}
(u,p(u,v),q(u,v)), 
\end{equation}
where
\begin{align*}
p(u,v)&=
\frac12 v^2 + \sum_{i=2}^k \frac{b_i}{i!} u^i + O(u,v)^{k+1},\,\\
q(u,v)&=
\frac12 a_{2,0} u^2 + \sum_{m=3}^k\sum_{i+j=m} \frac{a_{i,j}}{i!j!}u^i v^j + O(u,v)^{k+1}.
\end{align*}
\end{proposition}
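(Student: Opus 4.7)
The plan is to reduce $g$ to the stated form in four stages: first, normalize the linear and quadratic geometric data (image of $dg_0$, tangent plane of the image surface) by linear source changes and target rotations; second, set $g_1(u, v) = u$ exactly by the inverse function theorem; third, normalize the quadratic part of $p = g_2$ by source scalings and shifts, and appeal to the $\mathcal{A}^{(2)}$-orbit condition to recover the quadratic form of $q = g_3$; fourth, eliminate unwanted mixed $u^i v^j$ terms in $p$ inductively in the degree up to order $k$.

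For stages one and two: the 2-jet assumption gives $\mathrm{rank}(dg_0) = 1$ with non-degenerate second-order contribution in $\ker(dg_0)$. A linear source change puts $\ker(dg_0) = \mathrm{span}(\partial_v)$; a target rotation places $\mathrm{Im}(dg_0)$ along the $X$-axis; and a further rotation around the $X$-axis places the tangent plane $\mathrm{Im}(dg_0) + \R \cdot \partial_v^2 g(0)$ into the $XY$-plane, making the $v^2$-coefficient of $g_3$ vanish. A source scaling achieves $\partial_u g_1(0) = 1$, and the coordinate change $(u, v) \mapsto (g_1(u, v), v)$---a local diffeomorphism by the inverse function theorem---sets the first component equal to $u$. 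After relabeling, $g = (u, g_2(u, v), g_3(u, v))$.

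For stage three: a source rescaling of $v$ sets the $v^2$-coefficient of $g_2$ to $\tfrac12$, and a shift $v \mapsto v - \mu u$ kills its $uv$-coefficient, giving $g_2 = \tfrac12 v^2 + \tfrac{b_2}{2} u^2 + O(u,v)^3$. The $uv$-coefficient of $g_3$ is then automatically zero: the $\mathcal{A}^{(2)}$-orbit of $(u, v^2, 0)$ is distinguished from the Whitney-umbrella orbit $(u, v^2, uv)$ precisely by the vanishing of this coefficient in normalized coordinates, and restricted $\mathcal{A}^{(2)}$-operations preserve the orbit. Hence $g_3 = \tfrac{a_{2,0}}{2} u^2 + O(u, v)^3$ without further transformation.

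For stage four, induct on the degree $m$: assuming $p$ has the stated form up to degree $m - 1$, choose a polynomial $h(u, v)$ of degree $m - 1$ and substitute $v \mapsto v + h(u, v)$. Since $\partial p/\partial v = v + O(u, v)^2$, the induced change in $p$ at degree $m$ is, to leading order, $v \cdot h(u, v)$; choosing $h$ so that $v \cdot h$ cancels the undesired $u^i v^j$ (with $j \geq 1$, $(i, j) \neq (0, 2)$) terms of $p$ at degree $m$ completes the inductive step for $p$. The substitution preserves $g_1 = u$, and its effect on $g_3$ simply populates the coefficients $a_{i, j}$ of $q$. The main obstacle is the careful bookkeeping at the inductive step: verifying that $h$ exists at each order and that the substitution does not disturb the earlier normalizations, which requires tracking how the transformations propagate through the Taylor expansions---though this is essentially routine.
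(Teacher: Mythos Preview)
The paper does not supply its own proof of this proposition: it is quoted from the authors' earlier work \cite{FH2015} and stated without argument, so there is no in-paper proof to compare your attempt against.

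On its own merits, your outline is the standard reduction and is essentially correct. Stages one through three are routine; the one point worth a remark is your justification that the $uv$-coefficient of $g_3$ vanishes. You invoke the $\mathcal{A}^{(2)}$-orbit hypothesis, and this is valid: after your source normalizations the $2$-jet has the form $(u,\tfrac12 v^2+\tfrac{b_2}{2}u^2,\tfrac{a_{2,0}}{2}u^2+cuv)$, and applying the target diffeomorphism that subtracts suitable multiples of $u^2$ from the second and third components shows this $2$-jet is $\mathcal{A}$-equivalent to $(u,v^2,cuv)$, which lies in the $(u,v^2,0)$ orbit precisely when $c=0$. It would strengthen the write-up to spell this out rather than leave it as an orbit-preservation remark. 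In stage four your inductive step is sound: the substitution $v\mapsto v+h_{m-1}$ with $h_{m-1}$ homogeneous of degree $m-1\geq 2$ alters the degree-$m$ part of $p$ by exactly $v\cdot h_{m-1}$ (the corrections $h\cdot(p_v-v)$ and $h^2 p_{vv}$ land in degree $\geq m+1$), so every degree-$m$ monomial $u^iv^j$ with $j\geq1$ can be killed without disturbing lower orders, the first component $u$, or the already-fixed quadratic part of $q$. The effect on $q$ merely redefines the $a_{i,j}$, as you note.
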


Assume that $S$ is a singular surface parameterized by a smooth map-germ $g:(\R^2, 0)\to(\R^3, 0)$ of corank 1 at the origin $0$. 
The tangent plane degenerates to a line at the singular point $g(0,0)$.
We call such a line a {\it tangent line}. 
The plane passing through $g(0,0)$ perpendicular to the tangent line is called the {\it normal plane}.

We consider the orthogonal projection of $S$ onto the normal plane. 
The projection can be expressed as $(u,v)\mapsto(p(u,v),q(u,v))$.
Set the group $\mathcal{G} = \mathrm{GL}_2(\R) \times \mathrm{GL}_2(\R)$ which acts on $(j^2 p, j^2 q)$. 
The list of $\mathcal{G}$-orbits is given in Table \ref{tab:singular_point} (see \cite{Gibson1979} for example). 
The singular points of $S$ are classified in terms of the $\mathcal{G}$-class of $(j^2 p, j^2 q)$ in Table \ref{tab:singular_point}. 
From Proposition \ref{prop:normal_form}, if $j^2 g(0,0)$ is $\mathcal{A}$-equivalent to $(u,v^2,0)$ then the singular point $g(0,0)$ is a hyperbolic, inflection or degenerate inflection point.
\begin{table}[ht!]
\centering
\caption{The classification of the singular points.}
\begin{tabular}{cc}
\Bline
$\mathcal{G}$-class & Name \\\hline
$(x^2,y^2)$ & hyperbolic point\\
$(x y,x^2-y^2)$ & elliptic point\\
$(x^2,x y)$ & parabolic point\\
$(x^2\pm y^2,0)$ & inflection point\\
$(x^2,0)$ & degenerate inflection point\\
$(0,0)$ & degenerate inflection point\\\Bline
\end{tabular}
\label{tab:singular_point}
\end{table}

There exists non-zero vector $\eta\in T_{\bm{0}}\R^2$ such that
$dg_{\bm{0}}(\eta)=0$.
We call $\eta$ a {\it null vector} (cf. \cite{KRSUY2005}).
Suppose that $j^2{g}(0,0)$ is $\mathcal{A}$-equivalent to $(u,v^2,0)$.
The plane passing through $g(0,0)$ spanned by $\xi g(0,0)$ and $\eta\xi g(0,0)$ is called the {\it principal plane}, where $\xi \in T_{\bm{0}}\R^2$ is a non-zero vector so that $\{\xi,\eta\}$ is linearly independent and $\zeta f$ is the directional derivative of a vector valued function $f$ along the direction $\zeta$.
The vector, in the normal plane, normal to the principal plane is called the {\it principal normal vector}. 

We remark that the definitions of the tangent line, normal plane, principal plane, principal normal vector and type of singular points do not depend on the choice of coordinates in the source and choice of $\eta$.

A regular plane curve in the parameter space passing through $(0,0)$ is called a {\it tangential curve} if it is transverse to $\eta$ at $(0,0)$. 
Let $\gamma(t)$ be a parameterization of the tangential curve. 
Clearly, $g\circ\gamma$ is tangent to the tangent line of the singular surface. 
We denote $\Gamma$ by a family of tangential curves $\gamma$.
A member $\Gamma_0$ of the family is a {\it characteristic tangential curve} if the curvature of the orthogonal projection of $g\circ\Gamma_0$ onto the principal plane at $g(0,0)$ has an extremum value $\kappa_0$.
Note that tangential curves tangent to the characteristic tangential curve are characteristic tangential curves.

\begin{figure}[ht!]
\begin{minipage}[t]{0.49\textwidth}
\centering
\includegraphics[height=0.75\textwidth,clip]{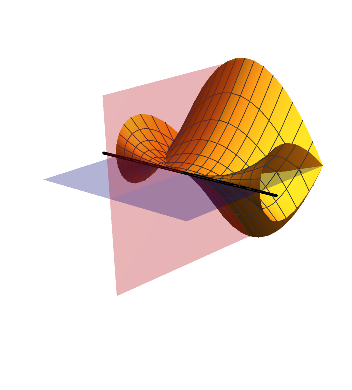}
\end{minipage}\hfill
\begin{minipage}[t]{0.49\textwidth}
\centering
\includegraphics[height=0.75\textwidth,clip]{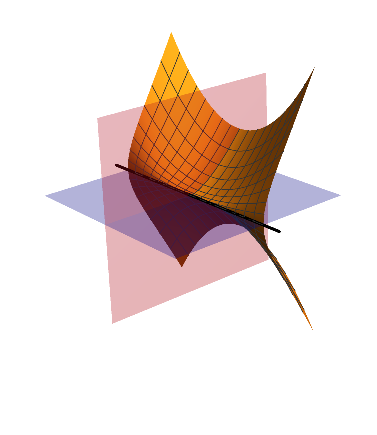}
\end{minipage}
\caption{The tangent line, normal plane and principal plane of $S_1^-$ (left) and $S_1^+$ (right).}
\end{figure}

\begin{remark}
\label{rem:Normal}
Assume that a singular surface is parameterized by $g:(\R^2,0)\to(\R^3,0)$ given in \eqref{eq:normal_form_corank1}.  
We can easily show that the tangent line is the $x$-axis and the normal plane is the $yz$-plane, where $(x,y,z)$ is the usual Cartesian  coordinate system of $\R^3$. 
Furthermore, the null vector can be chosen as $\eta = \partial_v$, and thus the principal plane is the $xy$-plane and $\pm \partial_z$ are the principal normal vectors.

We can take  $\Gamma=(u, c_1 u + c_2 u^2 + O(u^3))$ as the family of tangential curves.
The 2-jet of $g\circ\Gamma$ are given by $(u, (b_2 + c_1^2) u^2/2, a_{2,0} u^2/2)$.
It follows that tangential curves tangent to the $u$-axis are the characteristic tangential curves, and thus the singular point $g(0,0)$ is an inflection (resp. degenerate inflection) point if and only if $a_{2,0} = 0$ (resp. $a_{2,0} = b_2 = 0$).

By using the above argument, it is easily shown that the singular point $g(0,0)$ is an inflection point if and only if $g\circ\gamma$ have at least 3-point contact (inflectional tangent) with the principal plane at $g(0,0)$, and that the inflection point $g(0,0)$ is degenerate if and only if $\kappa_0 = 0$.
\end{remark}

\begin{proposition}[\cite{FH2015}]
\label{prop:CriteriaOfA}
Necessary and sufficient conditions for $g$ given in \eqref{eq:normal_form_corank1} to be $\ma$-equivalent to one of $S_k$, $B_k$, $C_k$, and $F_4$ are as follows: 
\begin{align*}
S_1:&\quad \uwave{a_{2,1}\ne0},\ a_{0,3} \ne0,\\
S_{k\geq2}:&\quad \uwave{a_{2,1} = \cdots = a_{k,1} = 0,\ a_{k+1,1}\ne0},\ a_{0,3}\ne0,\\ 
B_2:&\quad a_{0,3} = 0,\ \uwave{a_{2,1} \ne0},\ 3a_{0,5}\,a_{2,1} - 5a_{1,3}^2 \ne0,\\
B_{k\geq3}:&\quad a_{0,3} = 0,\ \uwave{a_{2,1} \ne0},\ 3a_{0,5}\,a_{2,1} - 5a_{1,3}^2 = 0,\ \xi_3 = \cdots = \xi_{k-1} = 0,\ \xi_k \ne0,\\
C_k:&\quad a_{0,3} = 0,\ \uwave{a_{2,1} = \cdots = a_{k-1,1} = 0,\ a_{k,1} \ne0},\ a_{1,3} \ne0,\\
F_4:&\quad a_{0,3} = 0,\ \uwave{a_{2,1} = 0,\ a_{3,1} \ne0},\ a_{1,3} = 0,\ a_{0,5} \ne0,
\end{align*}
where
\[
\xi_n = \sum_{i=0}^n \sum_{j\geq 1} \dfrac{a_{i,2j-1}\,c_2^{m_2}\cdots c_n^{m_k}}{m_2!\cdots m_k!\,(2j-1)!},\quad \sum_{l=2}^n m_l = i,\quad \sum_{l=2}^n (l-1)m_l = n - j + 1
\]
and $c_2$, $\ldots$, $c_k$ are constants determined by
\[
\sum_{i = 1}^n \sum_{j\geq 1} \dfrac{a_{i,2j-1}\,c_2^{l_2}\,c_3^{l_3}\cdots c_n^{l_n}}{l_2!\,l_3!\cdots l_n!\,(2n-1)!}=0,\quad \sum_{m=2}^n l_m= i -1,\quad \sum_{m=2}^n (m-1)l_m =n-j,\quad n=2,\ldots,k.
\]
\end{proposition}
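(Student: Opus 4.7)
The plan is to reduce $g$ given in \eqref{eq:normal_form_corank1} to one of the Mond normal forms listed in Table~\ref{tab:A-simple} by a sequence of $\mathcal{A}$-equivalences, and then to read off necessary and sufficient conditions on the Taylor coefficients of $p$ and $q$ that determine which equivalence class $g$ lies in. Two operations drive the reduction: target diffeomorphisms that add smooth functions of the first two components to the third, and source diffeomorphisms that preserve the even-in-$v$ structure of $p$.

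My first step is a preliminary simplification. Since the second component of $g$ is $p(u,v)=v^2/2 + \sum_{i=2}^k b_i u^i/i! + O(u,v)^{k+1}$ and the first is $u$, a target change of the form $(X,Y,Z)\mapsto (X, Y-\alpha(X), Z-\beta(X,Y))$ together with a source change $u\mapsto U(u)$ can bring $p$ exactly to $v^2/2$ and eliminate from $q$ any smooth function of $u$ and $p$. The ideal $(u,p)$ in the local ring of the target contains $u$ and $v^2$, so every monomial $u^i v^{2j}$ with $j\geq 0$, $(i,j)\neq(0,0)$ in $q$ is absorbed. After this reduction we may assume
\[
p(u,v)=\tfrac12 v^2, \qquad q(u,v)=\sum_{i\geq 0,\,j\geq 1} \frac{a_{i,2j-1}}{i!(2j-1)!}\,u^i v^{2j-1} + O(u,v)^{k+1}.
\]
Mond's normal forms for $S_k, B_k, C_k, F_4$ are all of this odd-in-$v$ shape, so it suffices to classify the polynomial $q$ modulo the residual $\mathcal{A}$-equivalences that preserve the first two components.

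Next I would carry out the case analysis. If $a_{0,3}\neq 0$, the leading odd-in-$v$ term of $q$ is a nonzero multiple of $v^3$; scaling fixes it to $\pm 1$, and a source change $v\mapsto v\cdot(1+\sum\gamma_j v^{2j})$ combined with a target correction kills all higher pure $v^{2j+1}$ terms. The remaining mixed monomials $u^i v$ are then reduced by substitutions that only affect higher-order coefficients, so the first non-vanishing $a_{i,1}$ determines the class: the conditions $a_{2,1}=\cdots=a_{k,1}=0$, $a_{k+1,1}\neq 0$ characterize $S_k$. The cases $C_k$ and $F_4$ follow the same scheme: the anchoring leading term in $q$ is $u v^3$ (for $C_k$) or $v^5$ together with $u^3 v$ (for $F_4$), and the conditions $a_{1,3}\neq 0$ respectively $a_{0,5}\neq 0,\ a_{3,1}\neq 0$ together with vanishing of lower $a_{i,1}$ arise from the analogous residual computation.

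The genuinely intricate case is $B_k$, where $a_{0,3}=0$ while $a_{2,1}\neq 0$, and the target form is $(u,v^2, u^2 v\pm v^{2k+1})$. Here there is no $v^3$ anchor, and the appropriate reduction is to kill the coefficients of $u^m v$ for $m\geq 3$ successively by source substitutions of the form $v\mapsto v+c_2 u + c_3 u^2 + \cdots + c_k u^{k-1}$, combined with target corrections that reabsorb the resulting even-in-$v$ contribution into $p$. The constants $c_m$ are determined recursively: requiring the coefficient of $u^n v$ to vanish at step $n$ gives a polynomial equation in $c_2,\dots,c_n$ with the $a_{i,2j-1}$ as coefficients, and expanding the substitution shows this equation is exactly the displayed defining system for $c_m$. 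The residual coefficient at weight $n$ after these substitutions is $\xi_n$, so the $B_k$ stratum is cut out by $\xi_3=\cdots=\xi_{k-1}=0$, $\xi_k\neq 0$. The main obstacle I anticipate is the combinatorial bookkeeping needed to verify that the explicit multinomial formula for $\xi_n$ emerges correctly from expanding $\bigl(\sum c_m u^{m-1}\bigr)^{i-(2j-1)}$ against the odd-in-$v$ part of $q$, and to check that no further $\mathcal{A}$-equivalence can reduce a nonzero $\xi_k$ term; this is a direct induction on $n$ but is technically demanding.
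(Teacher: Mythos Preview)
The paper does not prove this proposition; it is quoted from \cite{FH2015} and used as input. So there is no ``paper's own proof'' to compare against, and your proposal has to be judged on its own merits.

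Your overall plan---strip $p$ down to $v^2/2$, reduce $q$ to its odd-in-$v$ part, then match against Mond's normal forms---is sound, and your treatment of $S_k$, $C_k$, $F_4$ is along the right lines. The genuine problem is in the $B_k$ step. You propose to kill the coefficients of $u^m v$ for $m\geq 3$ by substitutions $v\mapsto v + c_2 u + c_3 u^2 + \cdots$, and you identify $\xi_n$ as a ``residual coefficient at weight $n$'' arising from this. Both claims are off. First, a substitution $v\mapsto v + c\,u^{m-1}$ sends $p=\tfrac12 v^2$ to $\tfrac12 v^2 + c\,u^{m-1}v + \tfrac12 c^2 u^{2m-2}$; the cross term $u^{m-1}v$ is odd in $v$ and cannot be reabsorbed by any target correction in $(u,p)$, so this move destroys the normalized second component and is not the reduction you want. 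Second, the combinatorics of the displayed formulas pins down what $\xi_n$ and the $c_m$ actually are: with the constraints $\sum_m l_m = i-1$, $\sum_m (m-1)l_m = n-j$ for $c_n$, and $\sum_m m_l = i$, $\sum_m (l-1)m_l = n-j+1$ for $\xi_n$, one checks (by counting $v$-degrees) that the correct substitution is
\[
u \longmapsto u + \sum_{m\geq 2} c_m\, v^{2(m-1)},
\]
which is compatible with the first two components after the target change $X\mapsto X-\sum c_m (2Y)^{m-1}$. The $c_m$ are then chosen to kill the coefficients of $u\,v^{2m-1}$ in $q$, and $\xi_n$ is the resulting coefficient of $v^{2n+1}$; this is exactly what distinguishes $B_n$ from $B_{n+1}$ in Mond's list. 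A quick sanity check: for $n=2$ one finds $c_2=-a_{1,3}/(6a_{2,1})$ and the $v^5$-coefficient becomes a nonzero multiple of $3a_{0,5}a_{2,1}-5a_{1,3}^2$, matching the $B_2$ condition in the statement. So the fix is to reverse the roles of $u$ and $v$ in your $B_k$ reduction and reinterpret $\xi_n$ accordingly; after that your inductive bookkeeping can proceed as you outlined.
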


Let $S$ be a singular surface parameterized by $g$ in \eqref{eq:normal_form_corank1}, and let $g$ be $\mathcal{A}$-equivalent to one of $S_k$, $B_k$, $C_k$ and $F_4$ singularities.
From Proposition \ref{prop:CriteriaOfA}, the condition that
\begin{align}
\label{eq:assumption}
a_{2,1} \ne0 \quad\text{or}\quad a_{2,1} = \cdots = a_{n,1} = 0,\ a_{n+1,1} \ne0
 \quad\text{for some}\quad n\geq2.
\end{align}
holds.
Consider maps
\[
\wt{\Pi}_{n+1}:\R\times S^1\to\R^2, \quad (r,\theta)\mapsto(r\cos\theta,r^{n+1}\cos^n\theta\sin\theta)\quad(n=1\text{ if }a_{21}\ne0),
\]
and
\[
\Pi_{n+1}:\mathcal{M}\to\R^2,\quad [(r,\theta)]\mapsto(r\cos\theta,r^{n+1}\cos^n\theta\sin\theta)\quad(n=1\text{ if }a_{21}\ne0), 
\]
where $\mathcal{M}=\R\times S^1/(r,\theta)\sim(-r,\theta+\pi)$.
The exceptional set $X=\Pi_{n+1}^{-1}(0,0)=\{(r,\theta)\,|\,r \cos\theta = 0\}$.

\begin{proposition}[\cite{FH2015}]
\label{prop:unit_normal}
The unit normal vector $\wt{\bm{n}}=\bm{n}\circ\wt{\Pi}_{n+1}$ to $S$ in the coordinates $(r,\theta)$ is extendible near $X$, and $\wt{\bm{n}}$ can be expressed as
\[
\wt{\bm{n}}(r,\theta) = \left(n_{11}r + O(r^2),\, n_{20} + n_{21}r + n_{22}r^2 + O(r^3),\, n_{30} + n_{31}r + n_{32}r^2 + O(r^3)\right)
\]
where
\[
n_{20} = -\dfrac{a_{n+1,1}\cos\theta}{\ma(\theta)},\quad n_{30} = \dfrac{(n+1)!\sin\theta}{\ma(\theta)}
\]
and the coefficients $(n_{11}, n_{21}, n_{22}, n_{31}, n_{32})$ are trigonometric polynomials with coefficients depending on the $4$-jet and $a_{i,1}$ $(n+1\leq i \leq n+3)$ of $g$, expressed in \upshape{(A.1)} to \upshape{(A.5)} in \cite{FH2015}.
Here,
\[
\ma(\theta) = \sqrt{a_{n+1,1}^2\cos^2\theta+((n+1)!)^2\sin^2\theta}.
\]
\end{proposition}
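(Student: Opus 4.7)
The plan is a direct computation of $\wt{\bm n}=(g_u\times g_v)/|g_u\times g_v|$ pulled back by $\wt{\Pi}_{n+1}$. Writing $g=(u,p(u,v),q(u,v))$ in the normal form \eqref{eq:normal_form_corank1}, one has $g_u\times g_v=(p_uq_v-p_vq_u,\,-q_v,\,p_v)$. A monomial $u^iv^j$ pulls back through $\wt{\Pi}_{n+1}$ to $r^{i+(n+1)j}\cos^{i+nj}\theta\sin^j\theta$, so any smooth function of $(u,v)$ pulls back to a power series in $r$ whose coefficients are trigonometric polynomials in $\theta$. The key step is then to factor out of numerator and denominator the common vanishing forced by the singularity.

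The hypothesis \eqref{eq:assumption} with $a_{n+1,1}\ne 0$ gives $q_v=\tfrac{a_{n+1,1}}{(n+1)!}u^{n+1}+(\text{order}\ge n+2\text{ in }(u,v))$ and $p_v=v+O(u,v)^2$. Substituting one finds
\[
q_v\circ\wt{\Pi}_{n+1}=\frac{r^{n+1}\cos^n\theta}{(n+1)!}\bigl(a_{n+1,1}\cos\theta+O(r)\bigr),\qquad p_v\circ\wt{\Pi}_{n+1}=\frac{r^{n+1}\cos^n\theta}{(n+1)!}\bigl((n+1)!\sin\theta+O(r)\bigr).
\]
Since $p_u$ and $q_u$ vanish to order one in $(u,v)$, the first component $p_uq_v-p_vq_u$ also contains the factor $r^{n+1}\cos^n\theta/(n+1)!$, but with an \emph{additional} factor of $r$. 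Defining $\bm N(r,\theta)=\tfrac{(n+1)!}{r^{n+1}\cos^n\theta}\,(g_u\times g_v)\circ\wt{\Pi}_{n+1}$, one checks that $\bm N$ is a smooth vector-valued function of $(r,\theta)$ with $\bm N(0,\theta)=(0,\,-a_{n+1,1}\cos\theta,\,(n+1)!\sin\theta)$, so $|\bm N|^2$ has leading value $\ma(\theta)^2\ne 0$. Consequently $\wt{\bm n}=\bm N/|\bm N|$ is smooth on a neighborhood of $X$, and evaluating at $r=0$ delivers
\[
n_{20}=-\frac{a_{n+1,1}\cos\theta}{\ma(\theta)},\qquad n_{30}=\frac{(n+1)!\sin\theta}{\ma(\theta)},
\]
while the $r^0$--coefficient of the first component of $\wt{\bm n}$ is zero, matching the stated form.

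The main obstacle, and the bulk of the work, is the explicit bookkeeping of the subleading coefficients $n_{11}$, $n_{21}$, $n_{22}$, $n_{31}$, $n_{32}$: one must track which of the $a_{i,j}$ and $b_i$ actually enter at orders $r$ and $r^2$, and confirm that $\wt{\bm n}$ depends only on the $4$-jet of $g$ together with $a_{n+1,1}$, $a_{n+2,1}$, $a_{n+3,1}$. This follows from the weighted-order observation above: since $u^iv^j\circ\wt{\Pi}_{n+1}$ vanishes to order $i+(n+1)j$ in $r$, only monomials of weighted degree $\le n+3$ contribute to the first two $r$-coefficients of $\bm N$, and these are precisely the entries of the $4$-jet and the $a_{i,1}$ with $n+1\le i\le n+3$. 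Expanding $\bm N$ to order $r^2$ and computing $|\bm N|^{-1}$ via the binomial series then produces the trigonometric polynomial expressions recorded in (A.1)--(A.5) of \cite{FH2015}, which is a lengthy but entirely mechanical calculation.
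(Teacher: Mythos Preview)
The paper does not prove this proposition; it is quoted from \cite{FH2015} without argument, so there is no in-paper proof to compare against. Your direct computation---pulling back $g_u\times g_v=(p_uq_v-p_vq_u,\,-q_v,\,p_v)$ through $\wt\Pi_{n+1}$, factoring out $r^{n+1}\cos^n\theta/(n+1)!$, and normalising---is the natural route and is essentially correct; it is almost certainly what \cite{FH2015} does as well.

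Two small points deserve tightening. First, when you assert that $\bm N$ is smooth in $(r,\theta)$, you should say a word about why the division by $\cos^n\theta$ is harmless on the part of $X$ where $\cos\theta=0$: every monomial $u^av^b$ pulls back with a factor $\cos^{a+nb}\theta$, and the hypothesis $a_{2,1}=\cdots=a_{n,1}=0$ (together with the shape of $p$) is exactly what guarantees $a+nb\ge n$ for all monomials surviving in $p_v$, $q_v$, and $p_uq_v-p_vq_u$; without invoking \eqref{eq:assumption} here the cancellation would fail. Second, your phrase ``the first two $r$-coefficients of $\bm N$'' undercounts slightly: to recover $n_{22}$ and $n_{32}$ you need $\bm N$ through order $r^2$, i.e.\ three coefficients, so the weighted-degree cutoff is $\le n+3$ as you state, but for all three subleading orders, not two. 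Neither point affects the validity of the argument.
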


\begin{proposition}[\cite{FH2015}]
The principal curvatures $\tk{i}=\kappa_i\circ\wt{\Pi}_{n+1}$ $(i=1,2)$  of $S$ in the coordinates $(r,\theta)$ are given by
\begin{align}
\label{eq:blow_k1}
\tk{1}(r,\theta)&=k_{10}+k_{11}r+k_{12}r^2+O(r^3),\\ 
\label{eq:blow_k2}
\tk{2}(r,\theta)&=\frac1{r^{2k+2}}(k_{20}+k_{21}r+O(r^2)),
\end{align}
where
\begin{align}
\label{eq:blow_k10}
k_{10}&= \frac{-a_{n+1,1}\, b_2\cos\theta +(n+1)!\,a_{2,0}\sin\theta}{\ma(\theta)},\\
\label{eq:blow_k20}
k_{20}&=-\frac{((n+1)!)^2a_{n+1, 1}}{\ma(\theta)^3 \cos^{2n - 1}\theta},
\end{align}
and the coefficients $k_{11}$, $k_{12}$, and $k_{21}$ are trigonometric polynomials with coefficients depending on the $4$-jet and $a_{i,1}$ $(n+1\leq i \leq  n+3)$ of $g$, expressed in \upshape{(A.16)} to \upshape{(A.18)} in \cite{FH2015}. 
\end{proposition}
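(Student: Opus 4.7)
The plan is to compute the principal curvatures as eigenvalues of the Weingarten operator after composition with the blow-down $\wt{\Pi}_{n+1}$, exploiting the fact that the pulled-back unit normal $\wt{\bm{n}}$ extends smoothly across the exceptional set $X$ by the previous proposition. Setting $\wt{g} = g\circ\wt{\Pi}_{n+1}$, direct differentiation gives
\[
\wt{g}_r = \cos\theta\, g_u + (n+1)r^n\cos^n\theta\sin\theta\, g_v,\qquad
\wt{g}_\theta = -r\sin\theta\, g_u + r^{n+1}\cos^{n-1}\theta(\cos^2\theta - n\sin^2\theta)\, g_v,
\]
and the Jacobian of $\wt{\Pi}_{n+1}$ is $r^{n+1}\cos^n\theta$. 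On the regular locus, $\wt{\Pi}_{n+1}$ is a diffeomorphism, so the principal curvatures of $\wt g$ with unit normal $\wt{\bm{n}}$ coincide with $\kappa_i\circ\wt{\Pi}_{n+1}$. They are therefore the roots of $\lambda^2 - 2\wt H\lambda + \wt K = 0$, where
\[
\wt H = \frac{\wt E\wt N - 2\wt F\wt M + \wt G\wt L}{2(\wt E\wt G - \wt F^2)},\qquad \wt K = \frac{\wt L\wt N - \wt M^2}{\wt E\wt G - \wt F^2}.
\]

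Next I would expand each of $\wt g_r,\wt g_\theta,\wt g_{rr},\wt g_{r\theta},\wt g_{\theta\theta}$ as Taylor series in $r$ using the normal form \eqref{eq:normal_form_corank1}, and likewise insert the Taylor expansion of $\wt{\bm{n}}$ provided by the previous proposition. Substituting gives Taylor expansions of $\wt E,\wt F,\wt G,\wt L,\wt M,\wt N$ in $r$. The key structural observation is that the Jacobian factors out: the identity $\wt E\wt G - \wt F^2 = (r^{n+1}\cos^n\theta)^2\bigl((EG-F^2)\circ\wt\Pi_{n+1}\bigr)$ forces this quantity to vanish to order $r^{2n+2}$ along $X$, and a parallel analysis using $\wt{\bm{n}}$ shows $\wt L\wt N - \wt M^2$ also vanishes to order $r^{2n+2}$ with an explicit leading coefficient involving $a_{n+1,1}$. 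Consequently both $2\wt H$ and $\wt K$ have Laurent expansions beginning at order $r^{-(2n+2)}$, say
\[
2\wt H = \frac{A_0(\theta)}{r^{2n+2}} + O(r^{-(2n+1)}),\qquad \wt K = \frac{B_0(\theta)}{r^{2n+2}} + O(r^{-(2n+1)}).
\]

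Since the two roots of such a quadratic must separate by scale, one stays bounded while the other diverges like $1/r^{2n+2}$. The bounded root equals $B_0(\theta)/A_0(\theta) + O(r)$ and the divergent root equals $A_0(\theta)/r^{2n+2} + O(r^{-(2n+1)})$. Computing $A_0$ from the leading behaviour of $2\wt H$ and $B_0/A_0$ from the ratio should produce the stated formulas for $k_{10}$ and $k_{20}$, using in particular the leading expressions $n_{20}, n_{30}$ of $\wt{\bm{n}}$ from the previous proposition. The higher-order coefficients $k_{11}, k_{12}, k_{21}$ are obtained by continuing the expansion one or two orders further and matching against (A.16)--(A.18) of \cite{FH2015}. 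The main obstacle is algebraic bulk: many cancellations are needed to collapse the raw expressions to the claimed closed forms. Organising the Weingarten computation so that the common $(r^{n+1}\cos^n\theta)^2$ factor in $\wt E\wt G - \wt F^2$ is cleared at the outset, and keeping the $g_u$- and $g_v$-contributions in $\wt g_r,\wt g_\theta$ separate throughout, should make the scale separation of $\tk{1}$ and $\tk{2}$ and the required cancellations transparent.
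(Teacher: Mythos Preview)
The paper does not prove this proposition; it is quoted verbatim from the companion paper \cite{FH2015}, so there is no in-text argument to compare against. Your overall strategy---pull back the fundamental forms through $\wt{\Pi}_{n+1}$, use the smoothly extended $\wt{\bm{n}}$ to make $\wt L,\wt M,\wt N$ well defined, form $\wt H$ and $\wt K$, and then separate the two roots of $\lambda^2-2\wt H\lambda+\wt K=0$ by scale---is the natural route and is almost certainly what \cite{FH2015} does.

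There is, however, a bookkeeping slip in your order-of-vanishing analysis. You claim that the identity $\wt E\wt G-\wt F^2=(r^{n+1}\cos^n\theta)^2\bigl((EG-F^2)\circ\wt\Pi_{n+1}\bigr)$ forces this quantity to vanish to order $r^{2n+2}$; but this overlooks that $(EG-F^2)\circ\wt\Pi_{n+1}$ \emph{itself} vanishes to order $r^{2n+2}$, because $g$ is singular at the origin (so $g_u\times g_v\to0$) and $\wt\Pi_{n+1}$ is designed precisely so that the unit normal extends. From the normal form one checks directly that
\[
|g_u\times g_v|^2\circ\wt\Pi_{n+1}=\frac{r^{2n+2}\cos^{2n}\theta}{((n+1)!)^2}\,\ma(\theta)^2+O(r^{2n+3}),
\]
so $\wt E\wt G-\wt F^2$ actually vanishes to order $r^{4n+4}$. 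Correspondingly, both $\wt L\wt N-\wt M^2$ and $\wt E\wt N-2\wt F\wt M+\wt G\wt L$ vanish to order $r^{2n+2}$ (not $r^{4n+4}$), and it is only after division that $\wt K$ and $2\wt H$ acquire poles of order $r^{-(2n+2)}$ as you need. With this correction your scale-separation argument goes through unchanged and yields the stated $k_{10}$ and $k_{20}$; the remaining coefficients $k_{11},k_{12},k_{21}$ are, as you say, just further terms in the same expansion.
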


From \eqref{eq:blow_k1} to \eqref{eq:blow_k20}, it follows that the Gaussian curvature $\wt{K}=\tk{1}\tk{2}$ is given by
\begin{equation*}
\wt{K}(r,\theta) = \frac1{r^{2n+2}}\left(\frac{((n+1)!)^2 a_{n+1,1}(a_{n+1,1}\,b_2\cos\theta - (n+1)!\,a_{2,0}\sin\theta)}{\ma(\theta)^4\cos^{2n - 1}\theta}+O(r)\right).
\end{equation*}
We say that a point $(0,\theta_0)$ is an \textit{elliptic}, \textit{hyperbolic} or \textit{parabolic point over the singularity} of $S$ if $r^{2n+2}\wt{K}(0,\theta_0)$ is positive, negative, or zero, respectively.

A ridge point of a surface in $\R^3$ was first studied in details by Porteous \cite{Porteous1971} as a point where the distance squared function on the surface has an $A_{\geq 3}$-singularity.
It is also a point where one principal curvature has an extremum value along the corresponding line of curvature.
A point where one principal curvature has an extremum value along the other line of curvature is also important.
Such a point is called the sub-parabolic point, which was first studied in details by Bruce and Wilkinson \cite{BW1991} from the viewpoint of  folding maps.
 It is also a point where the lines of curvature have geodesic inflections.

As mentioned above, by using succession of blowing-ups, the unit normal vector is extendible and we can discus asypmtotic behavior near the singularity of our singular surface.
The principal vectors can be also extendible near the singularity.
So we can consider directional derivatives of the principal curvatures along principal vectors near the singularity, and thus ridge and sub-parabolic points can be defined near the singularity.

Let $\tv{i}$ denote the lifted principal vector of the principal vector $\bm{v}_i$ of $S$ by $\wt{\Pi}_{n+1}$.
The directionl derivetives of $\tk{i}$ along $\tv{i}$ are give by the followings:
\begin{align*}
\tv{1}\tk{1}(r,\theta) & = \frac{a_{n+1,1}\Delta_1^{(n+1)}(\theta)\cos\theta}{\ma(\theta)^2}+O(r),\\
\tv{1}^2\tk{1}(r,\theta) & = \dfrac{a_{n+1,1}\bigl(a_{n+1,1}\Delta_2^{(n + 1)}(\theta)\cos\theta - (n + 1)!\, a_{1,2} \Delta_1^{(n+1)}(\theta)\sin\theta\bigr)\cos\theta}{\ma(\theta)^3} + O(r),\\ 
\tv{2}\tk{1}(r,\theta)& = \dfrac1{r^{4 n + 3}}\left(-\dfrac{3 ((n + 1)!)^5 a_{n+1,1} \Delta_3^{(n+1)}(\theta) \sin\theta \cos^{3 - 4n}\theta}{\ma(\theta)^4} + O(r)\right),
\end{align*}
where
\begin{align*}
\Delta_1^{(n+1)}(\theta) & = a_{n+1,1}\,b_3\cos\theta - (n+1)!\,a_{3,0}\sin\theta, \\
\Delta_2^{(n+1)}(\theta) & = - (a_{n+1,1}\,b_4 \cos\theta - (n+1)!\,a_{4,0} \sin\theta)\cos\theta\\
&\quad + 3(a_{2,0}^2 + b_2^2)(a_{n+1,1}\,b_2\cos\theta - (n+1)!\,a_{2,0}\sin\theta)\cos\theta + 12 a_{2,1}\sin^2\theta,\\
\Delta_3^{(n+1)}(\theta) & = a_{n+1,1}\,a_{2,0}\cos\theta_0 - (n+1)!\,b_2\sin\theta_0.
\end{align*}
We define the ridge and sub-parabolic points over the singularity of $S$ are as follows:

\begin{definition}[\cite{FH2015}]
\label{prop:ridge}
Let $\cos\theta_0\ne0$.
\begin{enumerate}
\item 
A point $(0,\theta_0)$ is a \textit{ridge point relative to $\tv{1}$ over the singularity of $S$} if $\Delta_1^{(n+1)}(\theta_0)=0$. 
Moreover, the \textit{ridge point $(0,\theta_0)$ is a first (resp. second or higher) order ridge point relative to $\tv{1}$ over the singularity of $S$} if $\Delta_2^{(n+1)}(\theta_0)\ne0$ (resp. $=0$). 
\item 
A point $(0,\theta_0)$ is a \textit{sub-parabolic point relative to $\tv{2}$ over the singularity of $S$} if $\Delta_3^{(n+1)}(\theta_0)=0$.
\end{enumerate}
\end{definition}

If $g$ is $\mathcal{A}$-equivalent to one of $S_k$, $B_k$, $C_k$ and $F_4$ singularities, we obtain $\wt{\bm{n}}$, $\tk{i\,}$, and $\tv{i\,}$ via $\wt{\Pi}_m$ as shown in Table \ref{tab:BlowUp}. 
Hence we have the following lemma.
\begin{table}[ht!]
\caption{Correspondence between the type of $\ma$-singularity and $\wt{\Pi}_n$.}
\centering
\begin{tabular}{ccccc}
\Bline
$\ma$-type & $S_k$ & $B_k$ & $C_k$ & $F_4$ \\
\hline\\[-12pt]
$\wt{\Pi}_m$ & $\wt{\Pi}_{k+1}$ & $\wt{\Pi}_2$ & $\wt{\Pi}_k$ & $\wt{\Pi}_3$\\ 
\Bline
\end{tabular}
\label{tab:BlowUp}
\end{table}
\begin{lemma}[\cite{FH2015}]
The ridge point relative $\tv{1}$ and sub-parabolic point relative $\tv{2}$ over the singularity of $S$ are determined by $\Delta_{i}^{(m)}$ as shown in Table \ref{tab:Ridge}. 
\end{lemma}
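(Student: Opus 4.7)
The plan is to reduce the lemma to a direct application of Definition~\ref{prop:ridge} once the correct index $n+1$ of the blowing-up $\wt{\Pi}_{n+1}$ has been identified for each $\mathcal{A}$-class. First, I would read off from Proposition~\ref{prop:CriteriaOfA} exactly which integer $n$ satisfies the hypothesis \eqref{eq:assumption} in each case: namely $n=k$ for $S_k$ (so $n+1=k+1$), $n=1$ for every $B_k$ (so $n+1=2$), $n=k-1$ for $C_k$ (so $n+1=k$), and $n=2$ for $F_4$ (so $n+1=3$). This matches Table~\ref{tab:BlowUp} entry by entry, confirming that the relevant resolution for each $\mathcal{A}$-class is precisely the one listed there.

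Next, with the correct $n+1$ fixed, the expressions for $\tv{1}\tk{1}$, $\tv{1}^2\tk{1}$ and $\tv{2}\tk{1}$ displayed just before Definition~\ref{prop:ridge} are already written in the form $(\text{nonvanishing factor}) \cdot \Delta_j^{(n+1)}(\theta) + O(r)$ (up to the indicated negative power of $r$ in the case of $\tv{2}\tk{1}$). Consequently, the ridge condition and the sub-parabolic condition over the singularity reduce, under the standing assumption $\cos\theta_0\neq 0$ and $a_{n+1,1}\neq 0$, to the vanishing of $\Delta_1^{(n+1)}(\theta_0)$ and $\Delta_3^{(n+1)}(\theta_0)$ respectively, with the order of the ridge governed by $\Delta_2^{(n+1)}(\theta_0)$. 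Substituting the values of $n+1$ found in the first step yields Table~\ref{tab:Ridge}: $\Delta_i^{(k+1)}$ for $S_k$, $\Delta_i^{(2)}$ for $B_k$, $\Delta_i^{(k)}$ for $C_k$, and $\Delta_i^{(3)}$ for $F_4$.

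The main obstacle, to the extent there is one, is bookkeeping rather than geometry: one has to check that under the $\mathcal{A}$-class hypotheses the leading coefficients in front of $\Delta_j^{(n+1)}$ — which involve powers of $\ma(\theta)$, of $\cos\theta$, and the factor $a_{n+1,1}$ — are indeed nonzero on the relevant part of the exceptional set, so that the vanishing of $\Delta_j^{(n+1)}(\theta_0)$ is equivalent to the ridge/sub-parabolic condition rather than being swamped by a spurious factor. Since $a_{n+1,1}\neq 0$ is built into \eqref{eq:assumption} and $\ma(\theta)$ is strictly positive, this check is routine. No additional geometric input is needed: the lemma is a direct translation of Definition~\ref{prop:ridge} through the correspondence of Table~\ref{tab:BlowUp}.
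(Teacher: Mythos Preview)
Your proposal is correct and matches the paper's approach: the paper itself gives no separate proof, simply writing ``Hence we have the following lemma'' after establishing Table~\ref{tab:BlowUp}, so the lemma is understood as an immediate translation of Definition~\ref{prop:ridge} through the correspondence $\mathcal{A}\text{-type} \leftrightarrow \wt{\Pi}_m$ recorded there. Your explicit identification of $n+1$ from Proposition~\ref{prop:CriteriaOfA} and the check that the prefactors $a_{n+1,1}$, $\cos\theta_0$, $\mathcal{A}(\theta)$ are nonvanishing simply spell out what the paper leaves implicit.
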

\begin{table}[ht!]
\caption{Criteria for ridge and sub-parabolic points.}
\centering
\begin{tabular}{ccccc}
\Bline
$\ma$-type & $S_k$ & $B_k$ & $C_k$ & $F_4$ \\
\hline\\[-12pt]
$\Delta_i^{(m)}$ & $\Delta_i^{(k+1)}$ & $\Delta_i^{(2)}$ & $\Delta_i^{(k)}$ & $\Delta_i^{(3)}$\\
\Bline
\end{tabular}
\label{tab:Ridge}
\end{table}

\section{Families of height functions on singular surfaces.}

We do not recall here the definition of a versal unfolding.
Please refer, for example, to \cite[Section 8 and 19]{Arnold1986} and \cite[Section 3]{Wall1981}.
 
We consider a families $H$ and $\wt{H}$ of functions on a surface $S$ parameterized by a smooth map-germ $g:(\R^2,0)\to (\R^3,0)$ by
\[
H:(\R^2\times S^2, (0,\bm{v}_0)) \to\R,\quad H(u,v,\bm{v})=\langle g(u,v),\bm{v}\rangle 
\]
and
$$
\wt{H}:(\R^2\times S^2 \times \R,(0,\bm{v}_0,t_0))\to\R, \quad \wt{H}(u,v,\bm{v},t) = H(u,v,\bm{v})-t = \langle f(u,v),\bm{v}\rangle - t. 
$$
We define the function $h_{\bm{v}}(u,v) = H(u,v,\bm{v})$, which is the {\it height function on $S$ along $\bm{v}$}.
We also define the function $\wt{h}_{\bm{v},t}(u,v) = \wt{H}(u,v,\bm{v},t)$, which is the {\it extended height function on $S$ along $\bm{v}$}.
We remark that $H$ (resp. $\wt{H}$) is a $2$-parameter (resp. $3$-parameter) unfolding of $h_{\bm{v}}$ (resp. $\wt{h}_{\bm{v},t}$). 

\begin{theorem}
\label{thm:Height}
Let $S$ be a singular surface parameterized by a smooth map-germ $g:(\R^2,0)\to(\R^3,0)$ which is $\ma$-equivalent to one of $S_k^{\pm}$, $B_k^\pm$, $C_k^\pm$ and $F_4$ singularities, and let $g$ be given in the form \eqref{eq:normal_form_corank1}. 
Suppose that $\bm{v}_0$ is on the normal plane at the singularity, that is, $\bm{v}_0 = \pm\wt{\bm{n}}(0,\theta_0)$, where $\wt{\bm{n}}$ is the well-defined unit normal vector obtained by using $\wt{\Pi}_n$ determined by Table \ref{tab:BlowUp} and $\theta_0 \in (-\pi/2,\pi/2]$. 
\begin{enumerate}
\item 
Suppose that $\bm{v}_0$ is not the \textR{principal} normal vector.
\begin{enumerate}
\item[\upshape(1a)] 
$h_{\bm{v}_0}$ and $\wt{h}_{\bm{v}_0,t_0}$ have an $A_1$ singularity at $(0,0)$ if and only if $(0,\theta_0)$ is not a parabolic point over the singularity of $S$.
When this is the case, $H$ $($resp. $\wt{H})$ is an $\mathcal{R}^+$$($resp. $\mathcal{K})$-versal unfoldings of $h_{\bm{v}_0}$ $($resp. $\wt{h}_{\bm{v}_0,t_0})$, respectively.  
\item[\upshape(1b)]
$h_{\bm{v}_0}$ and $\wt{h}_{\bm{v}_0,t_0}$ have an $A_2$ singularity at $(0,0)$ if and only if $(0,\theta_0)$ is a parabolic point and not a ridge point relative to $\tv{1}$ over the singularity of $S$. 
When this is the case, $H$ $($resp. $\wt{H})$ is an $\mathcal{R}^+$$($resp. $\mathcal{K})$-versal unfolding of $h_{\bm{v}_0}$ $($resp. $\wt{h}_{\bm{v}_0,t_0})$, respectively. 
\item[\upshape(1c)]
$h_{\bm{v}_0}$ and $\wt{h}_{\bm{v}_0,t_0}$ have an $A_3$ singularity at $(0,0)$ if and only if $(0,\theta_0)$ is a parabolic point and first order ridge point relative to $\tv{1}$ over the singularity of $S$. 
When this is the case, $H$ $($resp. $\wt{H})$ is an $\mathcal{R}^+$$($resp. $\mathcal{K})$-versal unfolding of $h_{\bm{v}_0}$ $($resp. $\wt{h}_{\bm{v}_0,t_0})$ if and only if $g(0,0)$ is not an inflection point.  
\item[\upshape(1d)]
$h_{\bm{v}_0}$ and $\wt{h}_{\bm{v}_0,t_0}$ have an $A_{\geq 4}$ singularity at $(0,0)$ if and only if $(0,\theta_0)$ is a parabolic point and second or higher order ridge point relative to $\tv{1}$ over the singularity of $S$.
When this is the case, $H$ $($resp. $\wt{H})$ is not an $\mathcal{R}^+$$($resp. $\mathcal{K})$-versal unfolding of $h_{\bm{v}_0}$ $($resp. $\wt{h}_{\bm{v}_0,t_0})$, respectively. 
\end{enumerate}
\item 
Suppose that $\bm{v}_0$ is the \textR{principal} normal vector.
Then $H$ $($resp. $\wt{H})$ is not an $\mathcal{R}^+$$($resp. $\mathcal{K})$-versal unfolding of $h_{\bm{v}_0}$ $($resp. $\wt{h}_{\bm{v}_0,t_0})$, respectively. 
\begin{enumerate}
\item[\upshape(2a)]
$h_{\bm{v}_0}$ and $\wt{h}_{\bm{v}_0,t_0}$ have an $A_{\geq 2}$ singularity at $(0,0)$ if and only if the singular point $g(0,0)$ of $S$ is not an inflection point. 
\item[\upshape(2b)]
$h_{\bm{v}_0}$ and $\wt{h}_{\bm{v}_0,t_0}$ have a $D_4$ or more degenerate singularity at $(0,0)$ if and only if $g(0,0)$ is an inflection point. 
\end{enumerate}
\end{enumerate}
\end{theorem}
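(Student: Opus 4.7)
The plan is to work directly in the normal form \eqref{eq:normal_form_corank1} and to interpret every quantity that arises through the blown-up expressions $k_{10}$, $k_{20}$ and $\Delta_i^{(n+1)}$ of Section 2. By Remark \ref{rem:Normal} the normal plane is the $yz$-plane, so I write $\bm{v}_0=\pm\wt{\bm{n}}(0,\theta_0)=(0,v_2,v_3)$ with $(v_2,v_3)$ read off from Proposition \ref{prop:unit_normal}, and observe that $\cos\theta_0=0$ singles out the principal normal direction. Setting $h_{\bm{v}_0}(u,v)=v_2p(u,v)+v_3q(u,v)$ and using that neither $p$ nor $q$ has a $uv$ term, one computes the Hessian at $(0,0)$ to be diagonal with entries equal, up to sign, to $k_{10}(\theta_0)$ and $\mp a_{n+1,1}\cos\theta_0/\ma(\theta_0)$. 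Since $k_{20}(\theta_0)\ne 0$ whenever $\cos\theta_0\ne 0$, the parabolic-over-the-singularity condition becomes simply $k_{10}(\theta_0)=0$.

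For case (1), $\cos\theta_0\ne 0$ forces $v_2\ne 0$ (using $a_{n+1,1}\ne 0$ from Proposition \ref{prop:CriteriaOfA}), so the Hessian is non-degenerate precisely when $k_{10}(\theta_0)\ne 0$; this yields (1a) together with the automatic $\mathcal{R}^+$-versality of $H$ for $A_1$. When $k_{10}(\theta_0)=0$, the splitting lemma presents $h_{\bm{v}_0}$ as $\pm v^2+\phi(u)$, where $\phi(u)=h_{\bm{v}_0}(u,V(u))$ for the smooth $V(u)$ solving $\partial_v h_{\bm{v}_0}=0$ at $V(0)=0$. The core calculation is to show
\[
\phi'''(0)=v_2b_3+v_3a_{3,0}=\mp\,\Delta_1^{(n+1)}(\theta_0)/\ma(\theta_0),
\]
and that at a parabolic ridge point $\phi^{(4)}(0)$ is a nonzero scalar multiple of $\Delta_2^{(n+1)}(\theta_0)$; these identifications deliver (1b), (1c) and (1d). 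For the versality statements I compute $\partial H/\partial\bm{v}$ in local coordinates on $S^2$ at $\bm{v}_0$ and verify that, together with the constant $1$, they span the required complement of the Jacobian ideal $J(h_{\bm{v}_0})$; for (1c) the extra independent direction becomes available precisely when $a_{2,0}\ne 0$, i.e.\ when $g(0,0)$ is not an inflection point. The $\mathcal{K}$-versality of $\wt{H}$ is handled in parallel, with the additional $t$-parameter absorbing the $\mathcal{K}$-generator contributed by the constant function.

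For case (2), $\cos\theta_0=0$ gives $\bm{v}_0=\pm(0,0,1)$ and $h_{\bm{v}_0}=\pm q$; since the $2$-jet of $q$ is $\tfrac12 a_{2,0}u^2$, the Hessian has rank $1$ when $a_{2,0}\ne 0$ and rank $0$ when $a_{2,0}=0$, which by Remark \ref{rem:Normal} gives (2a) and (2b). For non-versality, working in local $S^2$-coordinates $(v_1,v_2)$ at $\bm{v}_0$ one finds $\partial_{v_1}H|_{\bm{v}_0}=u$ and $\partial_{v_2}H|_{\bm{v}_0}=p(u,v)$; a direct inspection of $J(q)$ for each $\mathcal{A}$-type shows that these two germs together with $1$ cannot generate a complement of $J(h_{\bm{v}_0})$ in the local ring of germs at the origin, so $H$ (and likewise $\wt{H}$) fails to be versal.

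The main technical obstacle is the splitting-lemma computation in (1c)--(1d). It must be carried out uniformly for $n=1$ (where $V(u)=O(u^2)$ and the coefficient $a_{2,1}\ne 0$ produces nontrivial cross-terms that ultimately furnish the $12\,a_{2,1}\sin^2\theta_0$ summand of $\Delta_2^{(n+1)}$) and for $n\geq 2$ (where $V(u)=O(u^3)$ and those cross-terms drop out because $a_{2,1}=0$). The common framework provided by $\wt{\Pi}_n$ (Table \ref{tab:BlowUp}) together with Proposition \ref{prop:unit_normal} is exactly what allows $\phi^{(4)}(0)$ to be identified with a nonzero multiple of $\Delta_2^{(n+1)}(\theta_0)$ uniformly across $S_k$, $B_k$, $C_k$ and $F_4$, sparing a separate analysis of each normal form.
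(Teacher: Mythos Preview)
Your approach is correct and essentially identical to the paper's: the paper first isolates the coefficient-level criteria as a separate Proposition~\ref{prop:HeightFunction} (obtained via the explicit substitution $v\mapsto v-a_{2,1}z_0u^2/(2y_0)$, which is precisely your splitting-lemma step) and then substitutes $\bm{v}_0=\pm\wt{\bm{n}}(0,\theta_0)$ to match those criteria with $k_{10}$, $\Delta_1^{(n+1)}$, $\Delta_2^{(n+1)}$ and the inflection condition $a_{2,0}=0$. The versality and non-versality checks are likewise carried out the same way, with the paper disposing of the $A_{\geq4}$ and $D_4$ cases by the parameter-count argument you implicitly invoke.
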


To show Theorem \ref{thm:Height}, we first show criterion for singularities of height functions and versality of these functions in terms of the coefficients in \eqref{eq:normal_form_corank1}. 
We regard $\bm{v} \in S^2$ as the unit vector in $\R^3$ and we write $\bm{v} = (x, y, z)$.

\begin{proposition}
\label{prop:HeightFunction}
Let $g$ be given in the form \eqref{eq:normal_form_corank1}.
Then $h_{\bm{v}_0}$ and $\wt{h}_{\bm{v}_0,t_0}$ is singular at $(0,0)$ if and only if $\bm{v}_0 = (0,y_0,z_0)$.
Moreover, assume that  $h_{\bm{v}_0}$ is singular at $(0,0)$. 
Then 
\begin{enumerate}
\item 
$h_{\bm{v}_0}$ and $\wt{h}_{\bm{v}_0,t_0}$ have an $A_1$ singularity at $(0,0)$ if and only if $y_0 (b_2 y_0 + a_{2,0} z_0)\ne0$. 
When this is the case, $H$ $($resp. $\wt{H})$ is an $\mathcal{R}^+$$($resp. $\mathcal{K})$-versal unfolding of $h_{\bm{v}_0}$ $($resp. $\wt{h}_{\bm{v}_0,t_0})$. 
\item 
$h_{\bm{v}_0}$ and $\wt{h}_{\bm{v}_0,t_0}$ have an $A_2$ singularity at $(0,0)$ if and only if one of the following conditions holds: 
\begin{enumerate}
\item[\textup{(2a)}]
$b_2 y_0 + a_{2,0} z_0 = 0$, $y_0 \ne0$, $b_3 y_0 + a_{3,0} z_0 \ne0$;
\item[\textup{(2b)}]
$\bm{v}_0 = \pm(0,0,1)$, $a_{2,0} \ne 0$, $a_{0,3} \ne 0$.
\end{enumerate}
If condition \textup{(2a)} holds, then $H$ $($resp. $\wt{H})$ is an $\mathcal{R}^+$$($resp. $\mathcal{K})$-versal unfolding of $h_{\bm{v}_0}$ $($resp. $\wt{h}_{\bm{v}_0,t_0})$. 
On the other hand, if condition \textup{(2b)} holds, then $H$ $($resp. $\wt{H})$ is not an $\mathcal{R}^+$ $($resp. $\mathcal{K})$-versal unfolding of $h_{\bm{v}_0}$ $($resp. $\wt{h}_{\bm{v}_0,t_0})$.  
\item 
$h_{\bm{v}_0}$ and $\wt{h}_{\bm{v}_0,t_0}$ have an $A_3$ singularity at $(0,0)$ if and only if one of the following conditions holds: 
\begin{enumerate}
\item[\textup{(3a)}]
$b_2 y_0 + a_{2,0} z_0 = 0$, $y_0 \ne0$, $b_3 y_0 + a_{3,0} z_0 = 0$, $b_4 y_0^2 + a_{4,0} y_0 z_0 - 3a_{2,1}^2 z_0^2\ne0$;
\item[\textup{(3b)}]
$\bm{v}_0 = \pm(0,0,1)$, $a_{2,0} \ne 0$, $a_{0,3} = 0$, $a_{2,0}\, a_{0,4} - 3 a_{1,2}^2 \ne 0$.
\end{enumerate}
If condition \textup{(3a)} holds, then $H$ $($resp. $\wt{H})$ is an $\mathcal{R}^+$$($resp. $\mathcal{K})$-versal unfolding of $h_{\bm{v}_0}$ $($resp. $\wt{h}_{\bm{v}_0,t_0})$ if and only if $a_{2,0} \ne 0$. 
On the other hand, if condition \textup{(3b)} holds, then $H$ $($resp. $\wt{H})$ is not an $\mathcal{R}^+$$($resp. $\mathcal{K})$-versal unfolding of $h_{\bm{v}_0}$ $($resp. $\wt{h}_{\bm{v}_0,t_0})$.  
\item 
$h_{\bm{v}_0}$ and $\wt{h}_{\bm{v}_0,t_0}$ have an $A_{\geq4}$ singularity at $(0,0)$ if and only if one of the following conditions holds: 
\begin{enumerate}
\item[\textup{(4a)}]
$b_2 y_0 + a_{2,0} z_0 = 0$, $y_0 \ne 0$, $b_3 y_0 + a_{3,0} z_0 =0$, $b_4 y_0^2 + a_{4,0} y_0 z_0 - 3 a_{2,1}^2 z_0^2 = 0$;
\item[\textup{(4b)}]
$\bm{v}_0 = \pm(0,0,1)$, $a_{2,0} \ne 0$, $a_{0,3} = 0$, $a_{2,0}\, a_{0,4} - 3 a_{1,2}^2 = 0$.
\end{enumerate}
When this is the case, $H$ $($resp. $\wt{H})$ is not an $\mathcal{R}^+$$($resp. $\mathcal{K})$-versal unfolding of $h_{\bm{v}_0}$ $($resp. $\wt{h}_{\bm{v}_0,t_0})$. 
\item 
$h_{\bm{v}_0}$ and $\wt{h}_{\bm{v}_0,t_0}$ have a singularity of type $D_4$ or more degenerate singularity at $(0,0)$ if and only if $\bm{v}_0 = \pm(0,0,1)$ and $a_{2,0} = 0$.
When this is the case, $H$ $($resp. $\wt{H})$ is not an $\mathcal{R}^+$$($resp. $\mathcal{K})$-versal unfolding of $h_{\bm{v}_0}$ $($resp. $\wt{h}_{\bm{v}_0,t_0})$. 
\end{enumerate}
\end{proposition}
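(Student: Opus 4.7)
The plan is to read off the singularity type of $h_{\bm{v}_0}$ directly from the normal form \eqref{eq:normal_form_corank1} by reducing to a one-variable function via the splitting lemma, and then to check versality by a short computation of the span generated by $\{1,\partial_{v_1}H|_{\bm{v}_0},\partial_{v_2}H|_{\bm{v}_0}\}$ modulo the Jacobian ideal $J(h_{\bm{v}_0})$. Writing $\bm{v}=(x,y,z)$ and expanding
\[
h_{\bm{v}}(u,v)=xu+y\,p(u,v)+z\,q(u,v),
\]
both $p$ and $q$ vanish to second order at $(0,0)$, so $\nabla h_{\bm{v}}(0,0)=(x,0)$, and hence $h_{\bm{v}_0}$ is singular at the origin iff $\bm{v}_0=(0,y_0,z_0)$. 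The Hessian at $(0,0)$ is then diagonal with entries $y_0 b_2+z_0 a_{2,0}$ and $y_0$, proving (1) immediately. The degenerate analysis then splits into three subcases according to the kernel of this Hessian.

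In the first subcase ($y_0\ne0$, $b_2 y_0+a_{2,0}z_0=0$), the null direction is $\partial_u$, so I apply the splitting lemma by solving $\partial_v h_{\bm{v}_0}=0$ via the implicit function theorem, obtaining $v=V(u)=-\frac{z_0 a_{2,1}}{2y_0}u^2+O(u^3)$; substituting back gives $h_{\bm{v}_0}(u,V(u))=\psi(u)$ with
\[
\psi(u)=\frac{b_3 y_0+a_{3,0}z_0}{6}u^3+\frac{b_4 y_0^2+a_{4,0}y_0 z_0-3a_{2,1}^2 z_0^2}{24\,y_0}u^4+O(u^5),
\]
and the successive coefficients of $\psi$ yield the conditions in (2a), (3a), (4a). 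In the second subcase ($y_0=0$ and $a_{2,0}\ne0$), $\bm{v}_0=\pm(0,0,1)$ and $h_{\bm{v}_0}=\pm q$; splitting off the $u$-direction by $u=U(v)=-\frac{a_{1,2}}{2 a_{2,0}}v^2+O(v^3)$ produces
\[
\phi(v)=\pm\left(\frac{a_{0,3}}{6}v^3+\frac{a_{2,0}a_{0,4}-3a_{1,2}^2}{24\,a_{2,0}}v^4+O(v^5)\right),
\]
giving (2b), (3b), (4b). If in addition $a_{2,0}=0$, then $j^2 h_{\bm{v}_0}=0$ and we are in the corank-$2$ case (5), so $h_{\bm{v}_0}$ is of type $D_4$ or worse.

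For the versality assertions I use the standard criterion that $H$ is $\mathcal{R}^+$-versal at $\bm{v}_0$ iff $\mathcal{E}_{u,v}=J(h_{\bm{v}_0})+\R\{1,\partial_{v_1}H|_{\bm{v}_0},\partial_{v_2}H|_{\bm{v}_0}\}$, where the $v_i$ are local coordinates on $S^2$ at $\bm{v}_0$. When $y_0\ne0$ I take the tangent frame $(1,0,0)$ and $(0,-z_0,y_0)$ at $\bm{v}_0$, giving the unfolding generators $u$ and $-z_0 p+y_0 q$; since $u$ already provides the linear direction in $\mathcal{E}/J$, the $A_k$-versality question reduces to whether the $u^{k-1}$-coefficient of $-z_0 p+y_0 q$ modulo $J(h_{\bm{v}_0})$ is nonzero. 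This is automatic in (1) and (2a), while for (3a) a short reduction using $b_2 y_0+a_{2,0}z_0=0$ and $y_0^2+z_0^2=1$ collapses the formal $u^2$-coefficient $(y_0 a_{2,0}-z_0 b_2)/2$ to $a_{2,0}/(2y_0)$, so versality holds iff $a_{2,0}\ne0$. When $y_0=0$, the chart $\bm{v}=(x,y,\sqrt{1-x^2-y^2})$ near $(0,0,1)$ produces the generators $u$ and $p$, but both reduce to $0$ modulo $J(h_{\bm{v}_0})+\R$ because $\partial_u q$ forces $u$ to lie in $J$ up to higher order and $\partial_v q$ forces $v^2$ (hence $p\equiv v^2/2$) to lie in $J$; consequently versality fails throughout (2b), (3b), (4b), and (5). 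The corresponding $\mathcal{K}$-versality statements for $\wt{H}$ follow at once because $\partial_t\wt{H}=-1$ supplies the constant generator and because for any $A_k$-singularity $h_{\bm{v}_0}\in J(h_{\bm{v}_0})$, so $\mathcal{E}/(J+\langle h_{\bm{v}_0}\rangle)=\mathcal{E}/J$ and the $\mathcal{R}^+$- and $\mathcal{K}$-versality conditions coincide.

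The main obstacle will be the bookkeeping in the $A_3$-versality analysis of case (3a): one must simultaneously exploit the constraint $b_2 y_0+a_{2,0}z_0=0$ and the normalization $y_0^2+z_0^2=1$ to reduce the $u^2$-coefficient of $-z_0 p+y_0 q$ modulo $J(h_{\bm{v}_0})$ to $a_{2,0}/(2y_0)$, which is precisely what identifies $a_{2,0}\ne0$ as the versality criterion. Everything else is routine Taylor expansion combined with the splitting lemma.
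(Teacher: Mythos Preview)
Your approach is essentially the paper's: reduce $h_{\bm{v}_0}$ to a one-variable germ via the splitting lemma (the paper carries out the equivalent explicit substitutions $v\mapsto v-\tfrac{a_{2,1}z_0}{2y_0}u^2$ and $u\mapsto u-\tfrac{a_{1,2}}{2a_{2,0}}v^2$), read off the $A_k$-conditions from the resulting Taylor coefficients, and then test versality by the standard infinitesimal criterion. Your singularity-type computations and the $(3\mathrm{a})$ reduction $(y_0a_{2,0}-z_0b_2)=a_{2,0}/y_0$ are correct and agree with the paper.

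Two points in the versality discussion should be tightened. First, case $(4\mathrm{a})$ is not covered by your argument: the reduction ``versality $\Leftrightarrow$ the $u^{k-1}$-coefficient of $-z_0p+y_0q$ is nonzero'' is only the right test for $k\le 3$; for $A_{\ge 4}$ the quotient $\mathcal{E}_2/(J(h_{\bm{v}_0})+\R)$ has dimension $\ge 3$ while $H$ has only two parameters, so versality fails by a pure dimension count --- this is exactly the paper's argument for (4) and (5). Second, your justification for $(2\mathrm{b})$--$(5)$ that ``$\partial_u q$ forces $u\in J$ and $\partial_v q$ forces $v^2\in J$'' is not literally correct: in $(3\mathrm{b})$ one has $J=\langle u,v^3\rangle$ after splitting, so $v^2\notin J$ and $p\not\equiv 0$; and in $(5)$ one has $a_{2,0}=0$, so $\partial_u q\in\mathfrak{m}^2$ no longer puts $u$ into $J$. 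The uniform argument (which the paper obtains by inspecting the low-order terms of $\partial_u h,\partial_v h,\partial_x H,\partial_y H$) is that every element of $J(q)+\R\{1,u,p\}$ has linear part proportional to $u$, since $q_v\in\mathfrak{m}^2$ and $p\in\mathfrak{m}^2$; hence $v$ is never generated and versality fails in all of $(2\mathrm{b})$, $(3\mathrm{b})$, $(4\mathrm{b})$, $(5)$.
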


\begin{proof}
Remark that the necessary and sufficient conditions to determine the type of singularities of $h_{\bm{v}_0}$ is same as that of $\wt{h}_{\bm{v}_0,t_0}$.
Moreover, the criteria for $H$ to be an $\mathcal{R}^+$-versal unfolding of $h_{\bm{v}_0}$ is much the same as that for $\wt{H}$ to be a $\mathcal{K}$-versal unfolding of $\wt{h}_{\bm{v}_0,t_0}$. 
So we shall prove the case of $h_{\bm{v}_0}$ and $H$.
 
Since $\partial h_{\bm{v}_0}/\partial u(0,0) = x_0$ and $\partial h_{\bm{v}_0}/\partial v(0,0) = 0$, the function $h_{\bm{v}_0}$ is singular at $(0,0)$ if and only if $x_0 =0$.
 
Assume that $x_0=0$.
Then
\begin{equation}
\label{eq:2JetOfHeightFunction}
j^2 h_{\bm{v}_0}(0,0) = \dfrac12((b_2 y_0 + a_{2,0} z_0) u^2 + y_0 v^2),
\end{equation}
the function $h_{\bm{v}_0}$ has an $A_1$ singularity at $(0,0)$ if and only if $y_0(b_2 y_0 + a_{2,0} z_0) \ne 0$.
Moreover, the singularity of $h_{\bm{v}_0}$ at $(0,0)$ is of type $A_{\geq 2}$  if and only if (i) $b_2 y_0 + a_{2,0} z_0 = 0$, $y_0 \ne 0$, or (ii) $\bm{v}_0 = \pm(0,0,1)$, $a_{2,0} \ne 0$, and it is of type $D_4$ or more degenerate if and only if $\bm{v}_0 = \pm(0,0,1)$, $a_{2,0} = 0$.

We assume that condition (i).
Since $y_0\ne0$, by replacing $v$ by $v - a_{2,1} z_0 u^2/(2 y_0)$,  we can reduce 4-jet of $h_{\bm{v}_0}$ to
\begin{align*}
\begin{split}
j^4h_{\bm{v}_0}(0,0) & = \dfrac12 y_0 v^2 + \dfrac16((b_3 y_0 + a_{3,0} z_0) u^3 + 3 a_{1,2} z_0 u v^2 + a_{0,3}z_0 v^3)\\
&\quad + \dfrac1{24}\left(\dfrac{b_4 y_0^2 + a_{4,0} y_0 z_0 - 3a_{2,1}^2 z_0^2}{y_0} u^4 + 4 c_{3,1} u^3 v + 6 c_{2,2} u^2 v^2 + 4 c_{1,3} u v^3 + c_{0,4}v^4\right),
\end{split}
\end{align*}
where $c_{3,1}, c_{2,2}, c_{1,3}, c_{0,4} \in \R$.
This expression implies that the assertions (2a), (3a) and (4a) hold.

We turn to the case (ii) and assume that condition (ii).
Since $a_{2,0}\ne0$, replacing $u$ by $u - a_{1,2} v^2/(2 a_{2,0})$, we can reduce 4-jet of $h_{{\bm v}_0}$ to
\begin{align*}
j^4h_{\bm{v}_0}(0) & = \pm\left(\dfrac12a_{2,0} u^2 + \dfrac16(a_{3,0} u^3 + 3 a_{2,1} u^2 v + a_{0,3} v^3)\right.\\
& \quad \left. + \dfrac1{24}\left(\hat c_{4,0} u^4 + 4 \hat c_{3,1} u^3 v + 6 \hat c_{2,2} u^2 v^2 + 4 \hat c_{1,3} u v^3 + \dfrac{a_{0,4}\, a_{2,0} - 3 a_{1,2}^2}{a_{2,0}} v^4\right)\right),
\end{align*}
where $\hat c_{4,0}, \hat c_{3,1}, \hat c_{2,2}, \hat c_{1,3} \in \R$. 
This expression implies that the assertions (2b), (3b) and (4b) hold.

We proceed to the proof of the versal unfoldings. 
We skip the proofs of the assertion (1) and (2), since the proofs of (1) and (2) are similar to that of (3).
First, we consider the condition (3a). 
Assume that (3a) holds.
We may assume that $y \ne 0$ near $(u,v,\bm{v}) = (0,0,\bm{v}_0)$.  
We set $y = \pm\sqrt{1-x^2-z^2}$.
Since $A_3$ singularity is $4$-determined, we need to verify the equality
\begin{equation}
\label{eq:VersalityHeight3a}
\mathcal{E}_2 = \left\langle \pd{h_{\bm{v}_0}}{u}, \pd{h_{\bm{v}_0}}{v} \right\rangle_{\mathcal{E}_2} + \left\langle\left.\pd{H}{x}\right|_{\R^2 \times\{\bm{v}_0\}}, \left.\pd{H}{z}\right|_{\R^2 \times\{\bm{v}_0\}}\right\rangle_{\R} + \langle 1 \rangle_{\R} + \langle u, v \rangle_{\mathcal{E}_2}^5
\end{equation}
holds to show that $H$ is an $\mathcal{R}^+$-versal unfolding of $h_{\bm{v}_0}$.
 Replacing $v$ by $v - a_{2,1} z_0 u^2/(2y_0)$, we show that the coefficients of $u^i v^j$ of functions appearing in \eqref{eq:VersalityHeight3a} are given by the following tables:
\[
\mbox{\footnotesize{$
\begin{array}{c|cc|ccc|cccc|c}
& u & v & u^2 & u v & v^2 & u^3 & u^2 v & u v^2 & v^3 & u^4 \\\hline
H_x & \fbox{$1$} & 0 & 0 & 0 & 0 & 0 & 0 & 0 & 0 & 0\\
H_z & 0 & 0 & C_{2,0}/2 & C_{1,1} & C_{0,2}/2 & C_{3,0}/6 & C_{2,1}/2 & C_{1,2}/2 & C_{0,3}/6 & C_{4,0}/24 \\\hline
(h_{\bm{v}_0})_u & 0 & 0 & 0 & 0 & a_{1,2} z_0/2 & \fbox{$c_{4,0}/6$} & c_{3,1}/2 & c_{2,2}/2 & c_{1,3}/6 & c_{5,0}/24 \\
(h_{\bm{v}_0})_v & 0 & \fbox{$y_0$} & 0 & a_{1,2} z_0/2 & a_{0,3} z_0 & c_{3,1}/6 & c_{2,2}/2 & c_{1,3}/2 & c_{0,4}/6 & c_{4,1}/24 \\\hline
u(h_{\bm{v}_0})_u & 0 & 0 & 0 & 0 & 0 & 0 & 0 & a_{1,2}z_0/2 & 0 & \fbox{$c_{4,0}/6$} \\
u(h_{\bm{v}_0})_v & 0 & 0 & 0 & \fbox{$y_0$} & 0 & 0 & a_{1,2}z_0/2 & a_{0,3}z_0 & 0 & c_{3,1}/6 \\
v(h_{\bm{v}_0})_v & 0 & 0 & 0 & 0 & \fbox{$y_0$} & 0 & 0 & a_{1,2}z_0/2 & a_{0,3}z_0 & 0 \\\hline
u^2(h_{\bm{v}_0})_v & 0 & 0 & 0 & 0 & 0 & 0 & \fbox{$y_0$} & 0 & 0 & 0 \\
u v(h_{\bm{v}_0})_v & 0 & 0 & 0 & 0 & 0 & 0 & 0 & \fbox{$y_0$} & 0 & 0 \\
v^2(h_{\bm{v}_0})_v & 0 & 0 & 0 & 0 & 0 & 0 & 0 & 0 & \fbox{$y_0$} & 0 \\\hline
\end{array}$}}
\]
\[
\mbox{\small{$
\begin{array}{c|c|ccccc}
& u^i v^j\ (i + j \geq 3) & u^4 & u^3 v & u^2 v^2 & u v^3 & v^4 \\\hline
u^3 (h_{\bm{v}_0})_v & 0 & 0 & \fbox{$y_0$} & 0 & 0 & 0 \\
u^2 v (h_{\bm{v}_0})_v & 0 & 0 & 0 & \fbox{$y_0$} & 0 & 0 \\
u v^2 (h_{\bm{v}_0})_v & 0 & 0 & 0 & 0 & \fbox{$y_0$} & 0 \\
v^3 (h_{\bm{v}_0})_v & 0 & 0 & 0 & 0 & 0 & \fbox{$y_0$}\\\hline
\end{array}$}}
\]
Here
\[
c_{i,j} = \dfrac{\partial^{i + j} h}{\partial u^i \partial v^j}(0,0) \quad\mbox{and}\quad C_{i,j} = \dfrac{\partial^{i + j + 1} H}{\partial u^i \partial v^j \partial z}(0,0,\bm{v}_0).
\]
We have $c_{4,0} = (b_4 y_0^2 + a_{4,0} y_0 z_0 - 3 a_{2,1}^2 z_0^2)/y_0$ and $C_{2,0} = (a_{2,0} y_0 - b_2 z_0)/y_0$.
Since $y_0 \ne 0$ and $b_4 y_0^2 + a_{4,0} y_0 z_0 - 3 a_{2,1}^2 z_0^2 \ne 0$, the matrix represented by the above tables is of full rank, that is, the equality \eqref{eq:VersalityHeight3a} holds if and only if $a_{2,0} y_0 - b_2 z_0 \ne0$ (i.e., $C_{2,0}\ne0$).
Now we have $b_2 y_0 + a_{2,0} z_0 = 0$ and $y_0 \ne 0$.
It follows that $a_{2,0} y_0 - b_2 z_0 \ne0$ is equivalent to $a_{2,0} \ne 0$.

Next, we consider the condition (3b).
We assume that (3b) holds. 
We may assume that $z \ne 0$ near $(u,v,\bm{v}) = (0,0,\bm{v}_0)$.
We set $z = \pm \sqrt{1 - x^2 - y^2}$.
The unfolding $H$ is an $\mathcal{R}^+$-versal unfolding of $h_{\bm{v}_0}$ if and only if 
\begin{equation}
\label{eq:VersalityHeight3b}
\mathcal{E}_2 = \left\langle\pd{h_{\bm{v}_0}}{u}, \pd{h_{\bm{v}_0}}{v}\right\rangle_{\mathcal{E}_2} + \left\langle\left.\pd{H}{x}\right|_{\R^2 \times\{\bm{v}_0\}}, \left.\pd{H}{y}\right|_{\R^2 \times\{\bm{v}_0\}}\right\rangle_{\R} + \langle 1 \rangle_{\R} + \langle u, v \rangle_{\mathcal{E}_2}^5. 
\end{equation}
Since
\begin{align*}
& \pd{h}{u}(u,v) = \pm \left(a_{2,0}u + \dfrac12(a_{3,0} u^2 + 2 a_{2,1} u v)\right) + O(u,v)^3,\\
& \pd{h}{v}(u,v) = \pm \dfrac12(a_{2,1} u^2 + a_{0,3} v^2) + O(u,v)^3,\\
& \pd{H}{x}(u,v,\bm{v}_0) = u, \\
& \pd{H}{y}(u,v,\bm{v}_0) = \dfrac12(b_2 u^2 + v^2) + O(u,v)^3,
\end{align*}
the equality \eqref{eq:VersalityHeight3b} does not hold. 

Last, we shall prove (4) and (5).
The number of parameters in an $\mathcal{R}^+$-mini-versal unfolding of $A_4$ is 3.
Therefore, $H$ is not an $\mathcal{R}^+$-versal unfolding of $h_{\bm{v}_0}$ having $A_{\geq 4}$ singularity because it is a 2-parameter unfolding. 
For the similar reason, $H$ is not an $\mathcal{R}^+$-versal unfolding of $h_{\bm{v}_0}$ having $D_4$ or more degenerate singularity.
\end{proof}

\begin{proof}[Proof of Theorem \ref{thm:Height}]
First we remark that the condition \eqref{eq:assumption} and the following condition hold.
\[
\bm{v}_0 = (x_0,y_0,z_0) = \pm \left(0,\,-\dfrac{a_{n+1,1}\cos\theta_0}{\mathcal{A}(\theta_0)},\, \dfrac{(n + 1)! \sin\theta_0}{\mathcal{A}(\theta_0)}\right).
\]
 
(1) The proofs of (1a), (1b) and (1d) are similar to that of (1c), so we will omit these proofs and only show the proof of (1c).
Since $y_0 \ne 0$, from Proposition \ref{prop:HeightFunction},  $h_{\bm{v}_0}$ has an $A_3$ singularity at $(0,0)$ if and only if
\begin{align}
\label{eq:HA2}
\begin{split}
& b_2 y_0 + a_{2,0}z_0 = \pm(-a_{n+1,1}\,b_2\cos\theta_0 + (n+1)!\,a_{2,0}\sin\theta_0) = 0
\end{split}\\
\begin{split}
& b_3 y_0 + a_{3,0}z_0 = \pm(-a_{n+1,1}\,b_3\cos\theta_0 + (n+1)!\,a_{3,0}\sin\theta_0) = 0
\end{split}\\
\intertext{and}
\label{eq:HA4}
\begin{split}
& b_4 y_0^2 + a_{4,0} y_0 z_0 - 3a_{2,1}^2 z_0^2\\ 
& = a_{n+1,1}\bigl(-a_{n+1,1}\,b_4 \cos\theta + (n+1)!\,a_{4,0} \sin\theta\bigr)\cos\theta + 12 a_{2,1}^2 \sin^2\theta \ne 0
\end{split}
\end{align}
hold, and $H$ is an $\mathcal{R}^+$-versal unfolding of $h_{\bm{v}_0}$ having an $A_3$ singularity $(0,0)$ if and only if $a_{2,0} \ne 0$. 
From the definitions of the parabolic and first order ridge point over singularity, $(0,\theta_0)$ is the parabolic point and first order ridge point relative to $\tv{1}$ over the singularity of $S$ if and only if \eqref{eq:HA2}--\eqref{eq:HA4} hold. 
Moreover, it follows from Remark \ref{rem:Normal} that $a_{2,0} \ne 0$ if and only if $g(0,0)$ is not the inflection point, and we complete the proof of (1c). 

(2) The statements follow from immediately Proposition \ref{prop:HeightFunction} and the definition of an inflection point. 
\end{proof}


\section{Parabolic sets of singular surfaces}

The locus of points $(u,v)$ where $h_{\bm{v}}$ along some directions $\bm{v}$ has a non-Morse type singularity is the parabolic set.
We call such a direction a \textit{degenerate normal direction}.
Even at a singular point of a singular surface parameterized by a smooth map-germ $g = g(u,v):(\R^2,0)\to(\R^3,0)$, we can define the parabolic set of the surface as the zero set of 
\begin{equation*}
\Sigma(u,v)=(\langle g_{uu},g_u\times g_v\rangle \langle g_{vv},g_u\times g_v\rangle-\langle g_{uv},g_u\times g_v\rangle^2)(u,v).
\end{equation*}
For $g$ given in \eqref{eq:normal_form_corank1}, we have
\begin{align*}
\Sigma(u,v) & = - \frac12 a_{2,0}(a_{2,1} u^2 v - a_{0,3} v^3) + \frac14 a_{2,1}^2 b_2 u^4 - \frac16 (a_{2,0}\,a_{1,3} + 3a_{3,0}\,a_{2,1} - 3a_{2,1}\,a_{1,2}\,b_2) u^3 v\\
&\quad - \frac32 a_{2,1}^2 u^2 v^2 + \frac12 (a_{2,0}\,a_{1,3} + a_{3,0}\,a_{0,3} - 4a_{2,1}\,a_{1,2} - a_{1,2}\,a_{0,3}\,b_2) u v^3\\
&\quad +\frac1{12}(4a_{2,0}\,a_{0,4} + 6a_{2,1}\,a_{0,3} - 12a_{1,2}^2 - 3a_{0,3}^2 b_2) v^4 + O(u,v)^5.
\end{align*}
The parabolic set $\{(u,v)\,|\,\Sigma(u,v) = 0\}$ has a singularity.
The singularities of the parabolic set of singular surfaces parameterized by one of $\mathcal{A}$-simple singularities of $\mathcal{A}_e$-codimension$\leq3$ are investigated in \cite{O-ST2012}.

In \cite{West1995} and \cite{NT2007}, Whitney umbrellas are generically classified into two types in terms of the singularity of the parabolic set in the parameter space. 
A Whitney umbrella whose parabolic set has an $A_1^-$ singularity is classified as \textit{elliptic Whitney umbrella}.
A Whitney umbrella whose parabolic set has an $A_1^+$ singularity is classified as \textit{hyperbolic Whitney umbrella}.
At the transition between two types, there is a Whitney umbrella whose
parabolic set has an $A_2$ singularity. 
Such a Whitney umbrella is classified as \textit{parabolic Whitney umbrella}.

The parabolic set on an elliptic Whitney umbrella is locally formed two intersecting smooth curves.
There are two degenerate normal direction of the elliptic Whitney umbrella at its singular point (see \cite{FH2013,O-ST2012}), and each branch of the parabolic set is associated with one of the two direction.  
Moreover, the torsion and its derivatives of the each branch relate to the type of the degenerate singularity of the hight function. 

\begin{theorem}[\cite{O-ST2012}, Theorem 2.2]
\label{thm:Tari2012}
Let $P_i(t)$ $(i=1,2)$ be parameterizations of branches of the parabolic set on an elliptic Whitney umbrella $P_i(0)$ being Whitney umbrella singularity, and $\tau_i(t)$ denote by the the torsion of $P_i(t)$. 
Then the height function on the elliptic Whitney umbrella along the degenerate normal direction associated to the branch $P_i$ has a singularity at the Whitney umbrella singularity of type 
\begin{align*}
A_2 &\Longleftrightarrow \tau_i(0)\ne0 \\
A_3 &\Longleftrightarrow \tau_i(0)=0,\,\tau_i^\prime(0)\ne0 \\
A_4 & \Longleftrightarrow \tau_i(0)=\tau_i^\prime(0)=0,\ \tau_i''(0) \ne0 \\
\end{align*}
\end{theorem}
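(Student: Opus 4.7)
The plan is to reduce the theorem to a computation in a convenient normal form, and then identify the $A_k$-discriminating conditions for the height function with the vanishing of successive derivatives of the torsion of the parabolic branch.

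First, I would put the elliptic Whitney umbrella in a normal form $g(u,v)=(u,v^2,uv+p(u,v))$ with $p=O((u,v)^3)$, so that the $x$-axis is the tangent line and the $yz$-plane is the normal plane. Computing $\Sigma(u,v)$ as in Section~4, one sees that its $2$-jet at the origin is a nondegenerate indefinite quadratic form precisely when the Whitney umbrella is elliptic (this is the characterization of \cite{West1995,NT2007}). Hence $\{\Sigma=0\}$ is the union of two smooth branches meeting transversely at the origin; parameterize them as $\gamma_i(t)=(u_i(t),v_i(t))$ so that $P_i(t)=g(\gamma_i(t))$ are smooth space curves in $\R^3$ with $P_i(0)=0$.

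Next, I would identify the degenerate normal direction $\bm{v}_i\in S^2$ as the limit of the unit normal along $P_i(t)$ as $t\to 0$ (computed, for instance, via the blow-up $\wt{\Pi}_1$), and verify geometrically that $\bm{v}_i$ is perpendicular to the osculating plane of $P_i$ at $t=0$. Since $g_v(0,0)=0$, the vector $P_i'(0)$ is parallel to the tangent line, giving $\langle P_i'(0),\bm{v}_i\rangle=0$. Moreover, along the parabolic branch the second fundamental form of the surface relative to $\bm{v}_i$ has $P_i'$ in its null direction; passing to the limit $t\to 0$ forces $\langle P_i''(0),\bm{v}_i\rangle=0$ as well.

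The central computation is the Taylor expansion of the restriction $\wt{h}(t):=\langle P_i(t),\bm{v}_i\rangle$. By the orthogonality above, $\wt{h}$ vanishes to order at least $3$ at $t=0$; a direct expansion using the formula $\tau_i(t)=\det(P_i',P_i'',P_i''')/|P_i'\times P_i''|^2$ shows that, up to a nonzero geometric prefactor, the coefficient of $t^{k+2}$ in $\wt{h}(t)$ equals $\tau_i^{(k-1)}(0)$ for $k=1,2,3$. Therefore the order of vanishing of $\wt{h}$ at $t=0$ is $3$, $4$, or $5$ exactly when $\tau_i(0)\ne 0$; $\tau_i(0)=0$ and $\tau_i'(0)\ne 0$; or $\tau_i(0)=\tau_i'(0)=0$ and $\tau_i''(0)\ne 0$, respectively.

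Finally, I would show that the $A_k$-singularity type of $h_{\bm{v}_i}$ as a function of two variables equals $(\text{order of vanishing of }\wt{h})-1$. The Hessian of $h_{\bm{v}_i}$ at the origin has rank exactly one (since the singularity is of type $A_{\geq 2}$), and its null direction is tangent to $\gamma_i$. By the splitting lemma one reduces $h_{\bm{v}_i}$ to the form $\pm u^2+F(v)$ with $v$ adapted to $\gamma_i$, and the order of vanishing of $F$ at $v=0$ coincides with that of $\wt{h}$ at $t=0$. The hardest part will be verifying that the null direction of the Hessian is genuinely tangent to $\gamma_i$ rather than to the other branch $\gamma_{3-i}$; this uses the elliptic (transverse two-branch) condition to match each parabolic branch with its corresponding degenerate normal. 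Combined with the torsion identification above, this yields the three equivalences of the theorem.
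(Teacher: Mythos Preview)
This theorem is quoted from \cite{O-ST2012} and is not proved in the present paper; it is stated only to motivate the analogous Theorem~\ref{thm:parabolic}, so there is no in-paper proof to compare against.

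Your proposal has a concrete gap at the very first step. The form $g(u,v)=(u,v^2,uv+p(u,v))$ with $p\in O((u,v)^3)$ is the $\mathcal{A}$-normal form of $S_0$, but it is \emph{not} a geometric normal form: using only rotations in the target and source diffeomorphisms one cannot in general kill the remaining quadratic coefficients, and a generic cross-cap in $\R^3$ carries free second-order data (compare the form $(u,\,uv+O(3),\,\tfrac12(a_{20}u^2+a_{02}v^2)+O(3))$ used in \cite{FH2013}). In your form a direct computation gives $j^2\Sigma(0,0)=-4v^2$, so the parabolic set has a degenerate $2$-jet for \emph{every} choice of $p$; you never obtain an $A_1^-$ singularity of $\Sigma$, hence never an elliptic Whitney umbrella with two transverse parabolic branches, and the rest of the plan cannot start.

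There is a second, subtler issue in your final step. Even with a correct normal form and the tangency $\gamma_i'(0)\in\ker(\mathrm{Hess}\,h_{\bm v_i})$, the splitting-lemma coordinate $w_2$ need not parameterize $\gamma_i$ beyond first order. Writing $\gamma_i(t)=(a(t),b(t))$ in the $(w_1,w_2)$-coordinates with $a(t)=a_2t^2+\cdots$, one has $\wt h(t)=\pm a(t)^2+F(b(t))$, and the term $\pm a_2^2t^4$ contaminates the $t^4$-coefficient. Thus $\mathrm{ord}_0\,\wt h$ matches the $A_k$-exponent only for $k=2$; for $A_3$ and $A_4$ you must additionally show $a_2=0$, which is a genuine extra computation you have not addressed. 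The proof of the analogous Theorem~\ref{thm:parabolic} in this paper avoids both pitfalls by working in the explicit normal form \eqref{eq:normal_form_corank1}, parameterizing the parabolic branch directly, evaluating $\bm b(0),\tau(0),\tau'(0)$ by hand (equations \eqref{eq:binormal}--\eqref{eq:tau'}), and matching them termwise against the reduced height-function coefficients \eqref{eq:height_in_binormal} obtained after one explicit source substitution.
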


We shall consider degenerate normal directions of a singular surface parameterized by a smooth map-germ $g$ whose 2-jet $j^2g(0)$ is $\ma$-equivalent to $(u,\,v^2\,0)$. 
We conclude from \eqref{eq:2JetOfHeightFunction} that if the singular point $g(0)$ is a degenerate inflection point then any direction in the normal plane at $g(0)$ is the degenerate direction, and that if $g(0)$ is non-degenerate inflection point then there is no degenerate normal direction. 
The following theorem is an analog result to that of the Theorem \ref{thm:Tari2012}. 
\begin{theorem}
\label{thm:parabolic}
Let $S$ be a singular surface parameterized by a smooth map-germ $g:(\R^2,0)\to(\R^3,0)$ which is $\mathcal{A}$-equivalent to one of $S_k$, $B_k$, $C_k$ and $F_4$ singularities, and let the singular point $g(0)$ of $S$ be not inflection point.  
\begin{enumerate}
\item 
There is a branch $P$, which is a characteristic tangential curve, of the parabolic set of $S$.
\item 
The branch $P$ has at least $m$-point contact with its tangent line at $(0,0)$, where $m$ is as shown in Table \ref{tab:contact}.
\begin{table}[ht!]
\caption{}
\centering
\begin{tabular}{ccccc}
\Bline
$\ma$-type of singular surfaces & $S_k$ & $B_k$ & $C_k$ & $F_4$ \\ \hline
$m$ & $k+1$ & 2 & $k$ & 3\\
\Bline
\end{tabular}
\label{tab:contact}
\end{table}
\item  
Let $\mathcal{L}(t)$ be a parameterization of $P$ on $S$ with $\mathcal{L}(0)$ being the singular point $g(0)$, and let $\bm{b}(t)$ and $\tau(t)$ denote by the unit binormal vector and the torsion of $\mathcal{L}(t)$, respectively.  
Then $h_{\bm{v}}$ on $S$ along $\bm{v}=\pm{\bm{b}}(0)$ has a singularity at $(0,0)$ of type 
\begin{align*}
A_2&\Longleftrightarrow\tau(0)\ne0\\
A_3&\Longleftrightarrow\tau(0)=0,\ \tau^\prime(0)\ne0,\\
A_{\geq4}&\Longleftrightarrow\tau(0)=\tau^\prime(0)=0.
\end{align*}
\end{enumerate}
\end{theorem}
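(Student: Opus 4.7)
The plan is in two main stages: first locate a branch $P$ of the parabolic set tangent to the $u$-axis and compute its contact order (this gives (1) and (2)), then compute the Frenet data of the lift $\mathcal{L}(t)=g(t,\phi(t))$ and match $\tau(0)$, $\tau'(0)$ against the criteria of Proposition~\ref{prop:HeightFunction} (which gives (3)).

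For the first stage, note that the non-inflection hypothesis forces $a_{2,0}\ne 0$ by Remark~\ref{rem:Normal}. Seeking a branch of the form $v=\phi(u)$ with $\phi(0)=\phi'(0)=0$ and substituting into the expansion of $\Sigma$ displayed at the start of this section, the Newton polygon in the $S_1$ case balances $\tfrac14 a_{2,1}^2 b_2 u^4$ against $-\tfrac12 a_{2,0}a_{2,1}u^2 v$ and yields $\phi(u)=\frac{a_{2,1}b_2}{2a_{2,0}}u^2+O(u^3)$, so the contact order is $2=k+1$. In the remaining $\ma$-types, the vanishings prescribed by Proposition~\ref{prop:CriteriaOfA} eliminate successively more low-order monomials of $\Sigma$ (so one must compute $\Sigma$ beyond the truncation shown, e.g.\ for $S_{k\geq 2}$ and $C_k$) and push the leading monomial of $\phi$ to higher degree, reproducing the values of $m$ in Table~\ref{tab:contact}. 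Since $\phi'(0)=0$, the curve $\gamma(t)=(t,\phi(t))$ is tangent to the $u$-axis and transverse to $\eta=\partial_v$, so Remark~\ref{rem:Normal} identifies it as a characteristic tangential curve.

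For the second stage, set $\mathcal{L}(t)=g(t,\phi(t))$ and $\phi_k:=\phi^{(k)}(0)$. Using $g_v(0)=g_{uv}(0)=(0,0,0)$, $g_{uu}(0)=(0,b_2,a_{2,0})$, $g_{uuu}(0)=(0,b_3,a_{3,0})$, $g_{uuuu}(0)=(0,b_4,a_{4,0})$, $g_{uuv}(0)=(0,0,a_{2,1})$, $g_{vv}(0)=(0,1,0)$ together with $\phi_0=\phi_1=0$, the chain rule yields
\[
\mathcal{L}'(0)=(1,0,0),\ \mathcal{L}''(0)=(0,b_2,a_{2,0}),\ \mathcal{L}'''(0)=(0,b_3,a_{3,0}),\ \mathcal{L}''''(0)=(0,b_4+3\phi_2^2,\,a_{4,0}+6\phi_2 a_{2,1}).
\]
The unit binormal $\bm{b}(0)$ is therefore parallel to $(0,-a_{2,0},b_2)$: this vector lies in the normal plane and automatically satisfies $b_2 y_0+a_{2,0}z_0=0$, so by Proposition~\ref{prop:HeightFunction} $h_{\pm\bm{b}(0)}$ has at worst an $A_{\geq 2}$ singularity. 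Moreover $\tau(0)=(b_2 a_{3,0}-a_{2,0}b_3)/(a_{2,0}^2+b_2^2)$ vanishes exactly when $b_3 y_0+a_{3,0}z_0=0$, which is the $A_{\geq 3}$ criterion; and when $\tau(0)=0$,
\[
\tau'(0)(a_{2,0}^2+b_2^2)=-(a_{2,0}b_4-a_{4,0}b_2)+6\phi_2 a_{2,1}b_2-3a_{2,0}\phi_2^2.
\]
Substituting the $\phi_2$ determined in the first stage---explicitly $\phi_2=a_{2,1}b_2/a_{2,0}$ in the $S_1$ and $B_k$ cases and $\phi_2=0$ in the remaining cases---this is a nonzero scalar multiple of $b_4 y_0^2+a_{4,0}y_0 z_0-3a_{2,1}^2 z_0^2$, exactly the $A_3$ criterion of Proposition~\ref{prop:HeightFunction}. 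The three cases of (3) now follow directly from Proposition~\ref{prop:HeightFunction}.

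The main technical obstacle is the uniform verification of the identification $\tau'(0)\sim b_4 y_0^2+a_{4,0}y_0 z_0-3a_{2,1}^2 z_0^2$ across every $\ma$-type. In each case one must rederive $\phi_2$ (and, for the most degenerate subcases, also $\phi_3$) from the Newton-polygon analysis of the first stage and check, using the defining vanishings of Proposition~\ref{prop:CriteriaOfA}, that the substitution into the $\tau'(0)$ formula produces the claimed proportionality. The work is mechanical but must be tracked carefully case by case.
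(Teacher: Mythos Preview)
Your proposal is correct and follows essentially the same strategy as the paper: put $g$ in the normal form \eqref{eq:normal_form_corank1} with $a_{2,0}\ne0$, extract a branch $v=\phi(u)$ of $\Sigma=0$ tangent to the $u$-axis, lift it to $\mathcal{L}(t)=g(t,\phi(t))$, compute $\bm b(0),\tau(0),\tau'(0)$, and compare with Proposition~\ref{prop:HeightFunction}. The only notable difference is that the paper handles stage~1 uniformly for all $n$ by writing $L'N'-(M')^2$ in terms of $A,B$ and reading off the branch $v=\tfrac{a_{n+1,1}b_2}{(n+1)!\,a_{2,0}}u^{n+1}+\cdots$ from a single Newton-polygon picture, whereas you do the $S_1$ case and defer the rest to analogous casework; conversely, your use of $\phi_2$ as a parameter in $\mathcal{L}''''(0)=(0,\,b_4+3\phi_2^2,\,a_{4,0}+6\phi_2 a_{2,1})$ unifies the paper's separate $a_{2,1}\ne0$ and $a_{2,1}=0$ computations (and since $g_v(0)=g_{uv}(0)=0$, the higher $\phi_k$ you worry about indeed never enter).
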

\begin{proof}
We may assume that $g$ is given by \eqref{eq:normal_form_corank1} with \eqref{eq:assumption} and $a_{2,0} \ne 0$.
Note that curves, in the source, which are tangent to the $u$-axis are the characteristic tangential curves (see Remark \ref{rem:Normal}).
Set
\[
A_m = A_m(u,v) = \sum_{i+j=m}\dfrac{a_{i,j}}{i!j!}u^i v^j,\quad A = A(u,v) = \sum_{m = 3}^k A_m,\quad B = B(u) = \sum_{i = 3}\dfrac{b_i}{i!}u^i.
\]
We have 
\begin{align*}
L'=&\langle g_{uu},g_u\times g_v\rangle =-A_vB_{uu}+v(a_{2,0}+A_{uu}),\\
M'=&\langle g_{uv},g_u\times g_v\rangle = vA_{uv},\ \textrm{and}\\
N'=&\langle g_{vv},g_u\times g_v\rangle =-A_v+vA_{vv}.
\end{align*}
We thus have 
\begin{align*}
L'N'-(M')^2 & = (A_vB_{uu}-v(a_{2,0}+A_{uu}))(A_v-vA_{vv})-v^2A_{uv}^2\\
& = A_v^2B_{uu}-vA_v(a_{2,0}+A_{uu}+A_{vv}B_{uu})+v^2((a_{2,0}+A_{uu})A_{vv}-A_{uv}^2),
\end{align*}
and
\begin{align*}
\partial_v(L'N'-(M')^2)|_{v=0} & = 2(A_vA_{vv})B_{uu}-A_v (a_{2,0}+A_{uu}+A_{vv}B_{uu})|_{v=0}\\
& = -A_v(a_{2,0}+A_{uu}-A_{vv}B_{uu})|_{v=0}.
\end{align*}
So we obtain that 
\begin{align*} 
L'N'-(M')^2|_{u=0} = & \dfrac{a_{2,0}\, a_{0,3}}2 v^3 + O(v^4),\\
L'N'-(M')^2|_{v=0} = & \frac{a_{n+1,1}^2}{((n+1)!)^2}u^{2(n+1)}B_{u u} + O(u^{2n+3}),\\
\partial_v(L'N'-(M')^2)|_{v=0} = & -\frac{a_{2,0}\,a_{n+1,1}}{(n+1)!}u^{n+1} + O(u^{n+2}).
\end{align*}
Since $B_{uu} = b_2 + b_3 u + O(u^2)$, so the Newton polygon of $L'N'-(M')^2$ looks like as in Figure \ref{fig:Newton}. 
Hence, the locus $L' N' -(M')^2 = 0$ has a local branch $P$ defined by  
$$
v = \frac{a_{n+1,1}\,b_2}{(n+1)!\, a_{2,0}}u^{n+1} + O(u^{n+2}), 
$$
which is the characteristic tangential curve and has at least $(n+1)$-point contact with its tangent line at $(0,0)$.
The number $n$ is determined by the type of $\mathcal{A}$-singularity, and Proposition \ref{prop:CriteriaOfA} gives the table of the assertion (2).

We now turn to the proof of (3).
First, we assume that $a_{2,1}\ne0$.
Then the branch $P$ has at least 2-point contact with its tangent line at $(0,0)$ and can be parameterized, in the parameter space, by $t\mapsto(t,\ a_{2,1}\,b_2 t^2/(2a_{2,0}) + O(t^3))$.
Hence,
\begin{equation}
\label{eq:branch}
j^4\mathcal{L}(0) = \left(t,\, \dfrac{b_2}2 t^2 + \dfrac{b_3}6 t^3 + \dfrac{a_{2,0}^2\, b_4 + a_{2,1}^2\, b_2^2}{24 a_{2,0}^2} t^4,\, \dfrac{a_{2,0}}2 t^2 + \dfrac{a_{3,0}}6 t^3 + \dfrac{a_{4,0}\,a_{2,0} + 6 a_{2,1}^2\, b_2}{24} t^4 \right).
\end{equation}
Straightforward calculations show that
\begin{align}
\label{eq:binormal}
\bm{b}(0) & = \left(0,\, -\dfrac{a_{2,0}}{\sqrt{a_{2,0}^2 + b_2^2}},\, \dfrac{b_2}{\sqrt{a_{2,0}^2 + b_2^2}}\right), \\
\label{eq:tau}
\tau(0) & = \dfrac{a_{3,0}\, b_2 - a_{2,0}\, b_3}{a_{2,0}^2 + b_2^2}, \\
\label{eq:tau'}
\tau'(0) & = -\dfrac{2(a_{3,0}\, b_2 - a_{2,0}\, b_3)(a_{2,0}\, a_{3,0} + b_2\, b_3)}{(a_{2,0}^2 + b_2^2)^2} + \dfrac{a_{4,0}\, a_{2,0}\, b_2 + 3 a_{2,1}^2\, b_2^2 - a_{2,0}^2\, b_4}{a_{2,0}(a_{2,0}^2 + b_2^2)}. 
\end{align}
We set $\bm{v} = \pm \bm{b}(0)$. 
The height function on $S$ in $\bm{v}$ is expressed as $h_{\bm{v}} = \mp a_{2,0} v^2/\sqrt{a_{2,0}^2 + b_2^2} + O(u,v)^3$ and has an $A_{\geq 2}$ singularity at $(0,0)$. 
Replacing $v$ by $v + a_{2,1}\, b_2 u^2/(2 a_{2,0})$, we show that the coefficients of $u^3$, $u^2 v$ and $u^4$ of $h_{\bm{v}}$ are, respectively,
\begin{equation}
\label{eq:height_in_binormal}
\dfrac{\pm (a_{3,0} b_2 - a_{2,0} b_3)}{6\sqrt{a_{2,0}^2 + b_2^2}},\quad 0,\quad \dfrac{\pm (a_{4,0} a_{2,0} b_2 + 3 a_{2,1}^2 b_2^2 - a_{2,0}^2 b_4)}{24 a_{2,0}\sqrt{a_{2,0}^2 + b_2^2}}.
\end{equation}
Therefore, the assertion follows form \eqref{eq:tau}--\eqref{eq:height_in_binormal}.

Next, we assume that $a_{2,1} = \cdots = a_{n,1} = 0$ and $a_{n+1,1} \ne 0$ for some $n\geq 2$.
Then the branch $P$ has at leat $(n+1)$-point contact with its tangent line at $(0,0)$ and can be parameterized by $t\mapsto (t,\ c t^{n+1} + O(t^{n+2}))$ $(c\in\R)$.
Hence, $j^4\mathcal{L}(0)$ is given by \eqref{eq:branch} with $a_{2,1}$ replaced by $0$, namely,  
\begin{equation*}
j^4\mathcal{L}(0) = \left(t,\ \dfrac{b_2}2 t^2 + \dfrac{b_3}6 t^3 + \dfrac{a_{2,0}^2\, b_4}{24 a_{2,0}^2} t^4,\ \dfrac{a_{2,0}}2 t^2 + \dfrac{a_{3,0}}6 t^3 + \dfrac{a_{4,0}\,a_{2,0}}{24} t^4 \right).
\end{equation*}
Therefore, the assertion follows from same argument above with $a_{2,1}$ replaced by $0$.

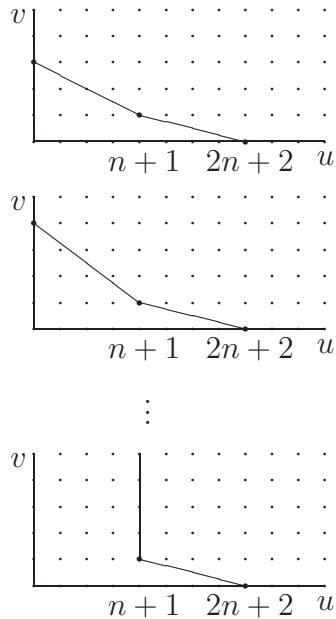
\begin{figure}[ht]
\centering
\begin{picture}(110,70)(-10,-20)
\put(0,0){\line(0,1){50}}
\put(0,0){\line(1,0){110}}
\put(0,30){\line(2,-1){40}}
\put(40,10){\line(4,-1){40}}
\put(80,0){\circle*{2}}
\put(40,10){\circle*{2}}
\put(0,30){\circle*{2}}
\put(28,-11){$n + 1$}
\put(65,-11){$2 n + 2$}
\put(107,-9){$u$}
\put(-9,45){$v$}
\multiput(0,0)(0,10){6}{
\multiput(0,0)(10,0){12}{\circle*{1}}}
\end{picture}
   
\begin{picture}(110,70)(-10,-20)
\put(0,0){\line(0,1){50}}
\put(0,0){\line(1,0){110}}
\put(0,40){\line(4,-3){40}}
\put(40,10){\line(4,-1){40}}
\put(80,0){\circle*{2}}
\put(40,10){\circle*{2}}
\put(0,40){\circle*{2}}
\put(28,-11){$n + 1$}
\put(65,-11){$2 n + 2$}
\put(107,-9){$u$}
\put(-9,45){$v$}
\multiput(0,0)(0,10){6}{
\multiput(0,0)(10,0){12}{\circle*{1}}}
\end{picture}

$\vdots$
\vspace*{10pt}
   
\begin{picture}(110,70)(-10,-20)
\put(0,0){\line(0,1){50}}
\put(0,0){\line(1,0){110}}
\put(40,10){\line(0,1){40}}
\put(40,10){\line(4,-1){40}}
\put(80,0){\circle*{2}}
\put(40,10){\circle*{2}}
\put(28,-11){$n + 1$}
\put(65,-11){$2 n + 2$}
\put(107,-9){$u$}
\put(-9,45){$v$}
\multiput(0,0)(0,10){6}{
\multiput(0,0)(10,0){12}{\circle*{1}}}
\end{picture}
\caption{Newton polygons of $L'N'-(M')^2$.}
\label{fig:Newton}
\end{figure}
\end{proof}

To compare the singularities of the height functions on between regular surfaces and singular surfaces from the view point of the torsion of (the branch of) the parabolic set,  we see the following proposition. 
\begin{proposition}
\label{prop:SinguOfHeightFunctionOnRegular}
Let $S$ be a regular surface parameterized by a smooth map $g:(\R^2,0)\to(\R^3,0)$, and let $g(0,0)$ is a parabolic point but not an umbilic point.
Suppose that the parabolic set in the parameter space is not singular at $(0,0)$, and that $\mathcal{L}(t)$ is the parameterization of the parabolic set on $S$ with $\mathcal{L}(0) = (0,0,0)$.
Let denote the unit binormal vector, the curvature and the torsion of $\mathcal{L}(t)$ by $\bm{b}(t)$, $\kappa(t)$ and $\tau(t)$, respectively. 
Let $h_{\bm{v}}$ be the height function on $S$ in the normal direction $\bm{v} = \pm \bm{n}(0,0)$.
If $\kappa(0) \ne0$, then the followings hold.  
\begin{enumerate}
\item 
The function $h_{\bm{v}}$ has an $A_2$ singularity at $(0,0)$ if and only if $\bm{b}(0)\ne\pm\bm{n}(0,0)$.
\item 
The function $h_{\bm{v}}$ has an $A_{\geq 3}$ singularity at $(0,0)$ if and only if $\bm{b}(0) = \pm\bm{n}(0,0)$.
If $h_{\bm{v}}$ has an $A_{\geq3}$ singularity at $(0,0)$, then $\tau(0) = 0$.
\item 
Assume that $h_{\bm{v}}$ has an $A_{\geq 3}$ singularity at $(0,0)$. 
Then if $\tau'(0) \ne 0$ then the singularity is of type $A_3$.
Moreover, if the singularity is of type $A_{\geq 4}$ then $\tau'(0) = 0$. 
\end{enumerate}
\end{proposition}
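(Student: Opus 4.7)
The plan is to work in Monge form. After a suitable rotation of $\R^3$ and choice of principal coordinates, we can write $g(x,y)=(x,y,f(x,y))$ with
\[
f(x,y)=\tfrac12 k_1\, x^2+\sum_{i+j\ge 3}\tfrac{a_{ij}}{i!\,j!}\,x^i y^j,
\]
where $k_1=k_1(0,0)\ne 0$ and $k_2(0,0)=0$ (since $g(0,0)$ is parabolic but not umbilic). A direct computation of $\Sigma=f_{xx}f_{yy}-f_{xy}^2$ gives $\nabla\Sigma(0,0)=k_1(a_{12},a_{03})$, so the smoothness hypothesis on the parabolic set is exactly $(a_{12},a_{03})\ne(0,0)$, and the parabolic curve is tangent to $(-a_{03},a_{12})$ at the origin. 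Note also that $\bm n(0,0)=(0,0,1)$.

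Next I would reduce $h_{\bm v}=\pm f$ by the splitting lemma. Since $f_{xx}(0)=k_1\ne 0$, the equation $f_x(x,y)=0$ admits a smooth solution $x=x(y)=-\tfrac{a_{12}}{2k_1}y^2+O(y^3)$, and substitution yields
\[
\tilde f(y):=f(x(y),y)=\tfrac{a_{03}}{6}y^3+\tfrac{k_1 a_{04}-3 a_{12}^2}{24 k_1}\,y^4+O(y^5).
\]
This identifies the $\mathcal R$-type of $h_{\bm v}$ at the origin: $A_2$ iff $a_{03}\ne 0$; $A_{\ge 3}$ iff $a_{03}=0$; $A_3$ iff moreover $k_1 a_{04}-3 a_{12}^2\ne 0$; and $A_{\ge 4}$ iff both $a_{03}=0$ and $k_1 a_{04}-3 a_{12}^2=0$.

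Then I would parameterize the parabolic curve and compute its binormal in the two cases. If $a_{03}\ne 0$, take $\mathcal L(t)=(t,y(t),f(t,y(t)))$ with $y'(0)=-a_{12}/a_{03}$; a direct computation gives $\mathcal L'(0)=(1,y'(0),0)$ and $\mathcal L''(0)$ has $z$-component $k_1\ne 0$, so the $y$-component of $\mathcal L'(0)\times\mathcal L''(0)$ equals $-k_1\ne 0$, forcing $\bm b(0)\ne\pm\bm n(0,0)$. If $a_{03}=0$ (so $a_{12}\ne 0$), take $\mathcal L(t)=(\phi(t),t,f(\phi(t),t))$ with $\phi(t)=c t^2+O(t^3)$; since $f_{yy}(0)=a_{03}=0$, $\mathcal L''(0)=(2c,0,0)$, hence $\mathcal L'(0)\times\mathcal L''(0)=(0,0,-2c)$ is parallel to $\bm n(0,0)$, while the hypothesis $\kappa(0)\ne 0$ guarantees $c\ne 0$. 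This establishes (1) and the first equivalence of (2). For $\tau(0)$ in the second case, the formula $\tau=((\mathcal L'\times\mathcal L'')\cdot\mathcal L''')/|\mathcal L'\times\mathcal L''|^2$ combined with $Z'''(0)=0$ (obtained by differentiating $Z''=f_{xx}\phi'^2+2f_{xy}\phi'+f_{yy}+f_x\phi''$ at $t=0$ using $\phi'(0)=0$ and $f_{yyy}(0)=a_{03}=0$) yields $\tau(0)=0$, completing (2).

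For part (3), differentiating once more gives $\tau'(0)$ proportional to $(\mathcal L'(0)\times\mathcal L''(0))\cdot\mathcal L^{(4)}(0)$, i.e.\ proportional to $Z^{(4)}(0)$. Determining $c$ from the parabolic equation $\Sigma\circ\mathcal L\equiv 0$ pins down $c=\tfrac{a_{12}}{k_1}-\tfrac{a_{04}}{2 a_{12}}$; expanding $Z^{(4)}(0)$ in terms of the fourth-order Taylor data of $f$ should then yield $\tau'(0)=\mu\,(k_1 a_{04}-3 a_{12}^2)$ with an explicit nonzero factor $\mu$, matching the quartic coefficient of $\tilde f$ and immediately giving (3). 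The main obstacle is this fourth-derivative bookkeeping for $Z^{(4)}(0)$, which mixes $f_{xxxx},f_{xxxy},f_{xxyy},f_{xyyy},f_{yyyy}$, the cubic coefficient of $\phi$, and the value of $c$; the clean cancellation into $k_1 a_{04}-3 a_{12}^2$ is morally inevitable in view of the splitting-lemma computation, but the calculation itself is the sole genuinely laborious step.
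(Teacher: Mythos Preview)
Your approach is essentially the paper's: Monge form with one vanishing principal curvature, splitting lemma to read off the $A_k$ conditions, and direct Frenet computation on the parabolic curve. Parts (1) and (2) are correct as written (up to the harmless swap of the roles of the two coordinates relative to the paper).

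The problem is your expectation in part (3). You claim that the computation ``should yield $\tau'(0)=\mu\,(k_1 a_{04}-3a_{12}^2)$ with an explicit nonzero factor $\mu$'' and that this ``clean cancellation is morally inevitable''. It is not. Carrying out exactly the calculation you set up (substitute $c=\dfrac{2a_{12}^2-a_{04}k_1}{2a_{12}k_1}$ into $Z^{(4)}(0)=12c^2k_1+12ca_{12}+a_{04}$) gives, after factoring,
\[
\tau'(0)\;=\;-\frac{Z^{(4)}(0)}{2c}
\;=\;\pm\frac{(8a_{12}^2-3a_{04}k_1)\,(3a_{12}^2-a_{04}k_1)}{a_{12}\,(2a_{12}^2-a_{04}k_1)},
\]
which is precisely the formula the paper obtains. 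The second factor is the $A_4$ obstruction, but the first factor $(8a_{12}^2-3a_{04}k_1)$ can vanish while $3a_{12}^2-a_{04}k_1\neq 0$ and $2a_{12}^2-a_{04}k_1\neq 0$ (i.e.\ at an honest $A_3$ point with $\kappa(0)\neq 0$). So $\tau'(0)$ is \emph{not} a nonzero multiple of $k_1a_{04}-3a_{12}^2$, and the equivalence ``$A_3\Leftrightarrow\tau'(0)\neq 0$'' that your formulation would give is false. This is exactly why statement (3) is phrased only as a one-way implication; the correct formula still proves (3), but your predicted form would overclaim.
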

\begin{proof}
We may assume that $g$ is given by Monge form
\[
g(u,v) = \left(u, v, f(u,v) \right), \quad f(u,v) = \dfrac12k_2 v^2 + \sum_{i + j = 3}^k \dfrac1{i! j!}a_{i,j}u^i v^j + O(u,v)^{k+1} \quad (k_2 \ne 0).
\]
Then we have $\bm{n}(0,0) = (0,0,1)$, and $h_{\bm{v}} = \pm k_2 v^2/2 + O(u,v)^3$. 
Replacing $v$ by $v - a_{2,1} u^2/(2 k_2)$, we show that the coefficients of $u^3$, $u^2 v$ and $v^4$ of $h_{\bm{v}}$ are, respectively,
\[
\pm\dfrac{a_{3,0}}{6},\quad 0,\quad \pm\dfrac{a_{4,0}\,k_2 - 3 a_{2,1}^2}{24 k_2}.
\]
It turns out that $h_{\bm{v}}$ has a singularity of type $A_{\geq3}$ and $A_{\geq 4}$ if and only if, respectively, 
\[
a_{3,0} = 0,\quad\mbox{and}\quad a_{3,0} = a_{4,0}\, k_2 - 3 a_{2,1}^2 = 0.
\]

The parabolic set is given by the zero set of $g_{uu}g_{vv} - g_{u,v}^2$. 
Now we write $\gamma(t) = (u(t), v(t))$ for the parameterization of the parabolic set with $(u(0), v(0)) = (0,0)$.
Since the parabolic set is not singular at $(0,0)$, we have $(u'(0), v'(0))\ne(0,0)$. 
The parabolic set on $S$ is given by $\mathcal{L}(t) = g \circ \gamma(t)$.
Straightforward calculations show that
\[
\mathcal{L}'(0) \times \mathcal{L}''(0) = (k_2 v'(0)^3,\ -k_2 u'(0)v'(0),\ u'(0) v''(0) - v'(0)u''(0)).
\]
It follows that if $\kappa(0) \ne 0$ then $(v'(0),v''(0))\ne(0,0)$. 
Assume that $\kappa(0)\ne0$.
Then $\bm{b}(0) = \pm \bm{n}(0,0) = \pm(0,0,1)$ if and only if $v'(0) = 0$. 
Since
\[
\pd{}{u}(g_{uu}g_{vv}-g_{u,v}^2)(0,0) = a_{3,0}\quad\mbox{and}\quad \pd{}{v}(g_{uu}g_{vv}-g_{u,v}^2)(0,0) = a_{2,1},
\]
$v'(0) = 0$ if and only if $a_{3,0} = 0$.
 
Assume that $a_{3,0} = 0$.
Since the parabolic set is not singular at $(0,0)$, we have $a_{2,1} \ne 0$. 
Then by the implicit function theorem we show that $\gamma(t)$ can be expressed as
\begin{equation*}
\gamma(t) = \left(t,\ \dfrac{2a_{2,1}^2 - a_{4,0}\, k_2}{2 a_{2,1} k_2} t^2 + \dfrac{a_{4,0}\, a_{3,1}\, k_2^2 - 2 a_{4,0}\, a_{2,1}\, a_{1,2}\, k_2 + a_{2,1}^3\, a_{1,2}}{2 a_{2,1}^2\, k_2^2} t^3 + O(t^4)\right)
\end{equation*}
near $(0,0)$. 
We remark that $2 a_{2,1}^2 - a_{4,0}\, k_2 \ne 0$ because $\kappa(0)\ne0$.
Straightforward calculations show
\begin{equation*}
\tau(0) = 0, \quad \tau'(0) = \dfrac{(8a_{2,1}^2 - 3 a_{4,0}\, k_2)(3 a_{2,1}^2 - a_{4,0}\, k_2)}{a_{2,1}(2 a_{2,1}^2 - a_{4,0}\, k_2)},  
\end{equation*}
which completes the proof. 
\end{proof}

\section{Singularities of dual surfaces}

Let a smooth map $f:U\to \R^3$ be a parameterization of a regular surface $M$, where $U \subset \R^2$ is a open subset.
We consider the family $\wt{H}$ of the extended height functions on $M$
\[
\wt{H}:U\times S^2\times \R\to\R, \quad \wt{H}(u,v,\bm{v},t) = H(u,v,\bm{v})-t = \langle f(u,v),\bm{v}\rangle - t. 
\]
Since $\wt{H}_u(u,v,\bm{v},t) = \wt{H}_v(u,v,\bm{v},t) = 0$ if and only if $\bm{v} = \pm \bm{n}(u,v)$. 
Hence, the discriminant set $\mathcal{D}(\wt{H})$ of $\wt{H}$ is given by
\begin{equation*}
\mathcal{D}(\wt{H}) = \{(\pm \bm{n}(u,v),\pm\langle f(u,v),\bm{v}\rangle)\,|\,(u,v)\in U\}. 
\end{equation*}
Set a smooth map
\[
\Psi:S^2 \times (\R\setminus \{0\}) \to (\R^3\setminus \{0\}), \quad \Psi(\bm{v},t) = t \bm{v}.
\]
We show that $\Psi(\mathcal{D}(\wt{H})) = \langle f(u,v),\bm{n}(u,v)\rangle \bm{n}(u,v)$ under the assumption that $\langle f(u,v),\bm{n}(u,v)\rangle \ne 0$.
If necessary, we have the condition $\langle f(u,v),\bm{n}(u,v)\rangle \ne 0$ by using isometries in $\R^3$, which do not change the geometry of $M$.
Therefore, we may assume $\langle f(u,v),\bm{n}(u,v)\rangle \ne 0$.
A {\it dual surface} of $M$ is a surface parameterized by 
\[
f^*:U \to \R^3, \quad f^*(u,v) = \langle f(u,v),\bm{n}(u,v)\rangle \bm{n}(u,v). 
\]
We remark that $\Psi(\mathcal{D}(\wt{H})) = f^*(U)$.
 
So, we define the dual of a singular surface $S$ parameterized by a smooth map $g$ of corank $1$ as follows.
Let $\bm{v}\in S^2$ be in the normal plane at the singularity of $S$.
A dual surface of $S$ is $\Psi(\mathcal{D}(\wt{H}))$ if $\langle g(u,v), \bm{n}(u,v)\rangle \ne 0$ at regular points $(u,v)$ and $\langle g(u_0,v_0), \bm{v} \rangle \ne 0$ at the singualrity $(u_0,v_0)$.
 
By the definition of a dual surface of $S$, the singularity of the dual surface of $S$ coincides with that of $\mathcal{D}(\wt{H})$.
It is well-known that the singularity of the discriminant set of the $\mathcal{K}$-versal unfolding of a function having $A_2$ singularity is a cuspidal edge.
It is also well-known that the singularity of the discriminant set of the $\mathcal{K}$-versal unfolding of a function having $A_3$ singularity is a swallowtail.
Here, a singularity is called a \textit{cuspidal edge} or \textit{swallowtail} if the corresponding map-germs is $\mathcal{A}$-equivalent to
\[
f_c:=(u^2, u^3, v)\quad \mbox{or}\quad f_s:=(3 u^4 + u^2 v, 4 u^3 + 2 u v, v),
\]
respectively. 
It follows that if $\wt{H}$ is a $\mathcal{K}$-versal unfolding of $\wt{h}_{\bm{v},t}(u,v) = \wt{H}(u,v,\bm{v},t)$ having $A_2$ (resp. $A_3$) singularity then the dual surface has a singularity of type cuspidal edge (resp. swallowtail).  

When a singular surface $S$ is parameterized by a smooth map $g$ in \eqref{eq:normal_form_corank1}, we consider a map 
\[
\bar{g}(u,v) = g(u,v) + \bm{p},
\]
where $\bm{p}$ satisfies the condition $\langle \bm{p}, \wt{\bm{n}}(0,\theta) \rangle \ne 0$.
Since translations preserve the geometry of a surface, we regard the dual of $\bar{S}$ parameterized by $\bar{g}$ as the dual of $S$. 
 
\begin{theorem}
\label{thm:dual}
Let $S$ be a singular surface parameterized by a smooth map-germ $g:(\R^2,0)\to(\R^3,0)$ which is $\mathcal{A}$-equivalent to one of $S_k$, $B_k$, $C_k$ and $F_4$ singularities, and let $g$ be given in \eqref{eq:normal_form_corank1}.
Assume that the singular point $g(0)$ of $S$ be not inflection point. 
\begin{enumerate}
\item 
If $(0,\theta_0)$ is a parabolic point over the singularity, the singularity of the dual surface $\wt{g}^*$ of $S$ at $\wt{g}^*(0,\theta_0)$ is a cuspidal edge. 
\item 
If $(0,\theta_0)$ is a parabolic point and a first order ridge relative to $\wtilde{\bm{v}}_1$ but not a sub-parabolic point relative to $\wtilde{\bm{v}}_2$ over the singularity, then the singularity of the $\wt{g}^*$ at $\wt{g}^*(0,\theta_0)$ is a swallowtail.
\end{enumerate}
\end{theorem}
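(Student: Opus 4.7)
The plan is to reduce Theorem~\ref{thm:dual} to Theorem~\ref{thm:Height} combined with the classical fact that the discriminant set of a $\mathcal{K}$-versal unfolding of a map-germ in two variables with $A_2$ (resp.\ $A_3$) singularity is $\mathcal{A}$-equivalent to the cuspidal edge $f_c$ (resp.\ swallowtail $f_s$) model.  Writing $\bm{v}_0 = \wt{\bm{n}}(0,\theta_0)$, the first step is to translate $g$ by a vector $\bm{p}$ chosen so that $t_0 := \langle g(0) + \bm{p}, \bm{v}_0\rangle \ne 0$; such $\bm{p}$ exists and translations do not alter the geometry of $S$.  With this choice, $\Psi(\bm{v},t) = t\bm{v}$ is a local diffeomorphism at $(\bm{v}_0,t_0)$, and, as recorded in the preamble to the theorem, the dual surface $\wt{g}^*$ is identified with $\Psi(\mathcal{D}(\wt{H}))$ as map-germs at $\wt{g}^*(0,\theta_0)$, where $\mathcal{D}(\wt{H})$ is parameterized via the blow-up $\wt{\Pi}_m$ by $(r,\theta) \mapsto \bigl(\wt{\bm{n}}(r,\theta),\, \langle g\circ \wt{\Pi}_m + \bm{p},\, \wt{\bm{n}}\rangle\bigr)$.

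For part (1), in the generic case where $(0,\theta_0)$ is parabolic but not a ridge, Theorem~\ref{thm:Height}(1b) gives that $\wt{h}_{\bm{v}_0,t_0}$ has an $A_2$ singularity at the origin and that $\wt{H}$ is a $\mathcal{K}$-versal unfolding.  The standard discriminant theorem then identifies $\mathcal{D}(\wt{H})$ with $f_c$ up to $\mathcal{A}$-equivalence, and since $\Psi$ is a local diffeomorphism this $\mathcal{A}$-class is preserved, yielding a cuspidal edge at $\wt{g}^*(0,\theta_0)$.  For part (2), the additional assumption that $(0,\theta_0)$ is a first-order ridge point relative to $\tv{1}$, together with the non-inflection hypothesis on $g(0)$, activates Theorem~\ref{thm:Height}(1c) and yields an $A_3$ singularity of $\wt{h}_{\bm{v}_0,t_0}$ together with $\mathcal{K}$-versality of $\wt{H}$.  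The discriminant theorem then identifies $\mathcal{D}(\wt{H})$ with $f_s$ up to $\mathcal{A}$-equivalence, and $\Psi$ transfers this to the swallowtail conclusion for $\wt{g}^*$.

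The main obstacle is understanding the role of the non-sub-parabolic hypothesis in part (2), since it is \emph{not} required by Theorem~\ref{thm:Height}(1c).  Its necessity must therefore come from the fact that we parameterize $\wt{g}^*$ through the blow-up $\wt{\Pi}_m$ rather than through the original source coordinates $(u,v)$.  Although $\mathcal{D}(\wt{H})$ is $\mathcal{A}$-equivalent to $f_s$ in the $(u,v)$-parameterization under the versality hypothesis, pulling back by the singular map $\wt{\Pi}_m$ can introduce extra vanishings of derivatives at $(0,\theta_0)$ in the $\theta$-direction that would break the $\mathcal{A}$-equivalence to $f_s$.  I expect the sub-parabolic quantity $\Delta_3^{(n+1)}(\theta_0)$ to control exactly such a derivative, appearing as the coefficient of a lowest-order term of $\partial\wt{g}^*/\partial\theta$ at $(0,\theta_0)$, so that the non-sub-parabolic condition is precisely what secures the standard swallowtail form.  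Verifying this by computing the leading jets of $\wt{g}^*(r,\theta)$ from the expansions for $\wt{\bm{n}}$, $\tk{i}$, and $\tv{i}$ collected in Section~2, and matching against the swallowtail criterion, will be the main computational step.
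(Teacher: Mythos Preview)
Your overall strategy---reduce to Theorem~\ref{thm:Height} and invoke the standard fact that the discriminant of a $\mathcal{K}$-versal unfolding of an $A_2$ (resp.\ $A_3$) germ is a cuspidal edge (resp.\ swallowtail)---is exactly the paper's approach. The paper's proof is in fact only two sentences: it observes that the versality conditions for $\wt{h}_{\bm{v},t}$ on the translated surface $\bar{S}$ coincide with those on $S$, and then cites Theorem~\ref{thm:Height}. Your treatment of the translation by $\bm{p}$ and the local diffeomorphism $\Psi$ is more explicit than the paper's, but identical in content.

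Where you diverge is in the final paragraph. You correctly notice that the ``not sub-parabolic'' hypothesis in part~(2) is \emph{not} used anywhere in Theorem~\ref{thm:Height}(1c), and you then propose a substantial additional computation---expanding $\wt{g}^*(r,\theta)$ through the blow-up and checking that $\Delta_3^{(n+1)}(\theta_0)\ne0$ is what secures $\mathcal{A}$-equivalence of the \emph{parameterized} map to $f_s$. The paper does none of this: its proof neither mentions nor uses the sub-parabolic hypothesis, and it treats the singularity of the dual as the singularity of the \emph{set} $\Psi(\mathcal{D}(\wt{H}))$, which is fixed by versality alone (see the paragraph just before the statement of the theorem). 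So the computational program you outline in your last paragraph is not part of the paper's argument; if you carried it out you would be proving something slightly sharper than what the paper actually establishes. For the purpose of matching the paper's proof, you should drop that step and simply note, as the paper does, that Theorem~\ref{thm:Height}(1b) and (1c) supply the required versality under the non-inflection hypothesis.
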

 
\begin{proof}
We consider the family of extended height functions
\[
\wt{H}:(\R^2\times S^2 \times \R, (0,\bm{v}_0,t_0)) \to \R, \quad \wt{H}(u,v,\bm{v},t) = \langle \bar{g}(u,v), \bm{v}\rangle - t.
\]
It is clear that the condition for $\wt{h}_{\bm{v},t}$ on $\bar{S}$ to have a versal unfolding coincide with that on $S$ to have a versal unfolding.
Hence, by using Theorem \ref{thm:Height} we complete the proof.
\end{proof}
\begin{remark}
Similar results are obtained for dual surfaces of regular surfaces (\cite{BGT1995}) and for dual surfaces of singular surfaces with cuspidal edge (\cite{Teramoto2015}) by analyzing the singularly of the height function on these surfaces. 
\end{remark}

\end{document}